\documentclass[11pt,a4paper]{amsart}
\usepackage[left=2.5cm,right=2.5cm,bottom=3cm,top=3cm,bindingoffset=0.5cm]{geometry}
\usepackage[utf8]{inputenc}
\usepackage[T1]{fontenc}
\usepackage{color}
\usepackage{amsmath,amssymb,amsthm,amsfonts}
\usepackage{mathtools}
\usepackage{fancyhdr}
\usepackage{csquotes}
\usepackage{bbm}
\usepackage{comment}
\usepackage{tikz}
\usepackage{vmargin}
\usepackage{setspace}
\usepackage{xcolor}
\usepackage{esint}

\definecolor{vertfonce}{rgb}{0.20, 0.46, 0.25}
\definecolor{rougefonce}{rgb}{0.64, 0.09, 0.20}
\usepackage[breaklinks=true,
            colorlinks=true,
            linkcolor=rougefonce,
            citecolor=vertfonce]{hyperref}
\usepackage{cleveref}
\usepackage{mathrsfs, enumerate}

\numberwithin{equation}{section}

\newcommand{\R}{\mathbf{R}}

\newcommand{\dd}{\mathrm{d}}

\newtheorem{theorem}{Theorem}
\newtheorem{lemma}[theorem]{Lemma}
\newtheorem{assumption}{Assumption}
\newtheorem{proposition}[theorem]{Proposition}

\title[Tunneling in homogeneous magnetic field]{Tunneling effect between radial electric wells in a homogeneous magnetic field}
\author[L. Morin]{Léo Morin}
\address[L. Morin]{Department of Mathematics, University of Copenhagen, Universitetsparken 5, DK-2100 Copenhagen \O, Denmark}
\email{lpdm@math.ku.dk }

\begin{document}

\begin{abstract}
We establish a tunneling formula for a Schrödinger operator with symmetric double-well potential and homogeneous magnetic field, in dimension two. Each well is assumed to be radially symmetric and compactly supported. We obtain an asymptotic formula for the difference between the two first eigenvalues of this operator, that is exponentially small in the semiclassical limit. 
\end{abstract}

\maketitle

\section{Introduction}

This article is devoted to the spectral analysis of electromagnetic Schrödinger operators with symmetries. Without magnetic fields, it is known that symmetries of the potential induce tunneling. This translates into a spectral gap between the two first eigenvalues that becomes exponentially small in the semiclassical limit. This effect was studied in \cite{HS,HS85b} where the spectral gap was estimated (see also the books \cite{Hel88,DS}). When adding a homogeneous magnetic field to this model, first general results were obtained in \cite{HS87} (for weak magnetic fields) and \cite{Nakamura} (upper bounds on the spectral gap). The problem was recently reconsidered in \cite{FSW22}, where some upper and lower bounds on the spectral gap are obtained. These bounds were improved in \cite{HK22}, giving a sharp exponential decay rate.

In this paper we consider a potential with two symmetric, radial, and compactly supported wells, as in \cite{FSW22,HK22}. We prove a sharp estimate on the spectral gap in presence of a constant magnetic field. As explained in these two references, the spectral gap is given by an integral term measuring the interaction between the wells, the \emph{hopping coefficient}. However, we suggest here another approach to estimate this coefficient, which is closer to the original spirit of \cite{HS}. This approach provides us with a shorter proof and better estimates.

A similar strategy was recently implemented to prove a purely magnetic tunneling formula, between radial magnetic wells \cite{FMR}. The case of multiple potential wells was also considered in \cite{HS87} (without magnetic field) and recently adapted to the magnetic case in \cite{HKS23}. In these articles it is explained how to reduce the problem to many double-well interactions. Therefore, the result we present below should also have applications to that setting.\\

We consider the following Schrödinger operator acting in the plane $\R^2$,
\begin{equation}\label{defH}
\mathcal H_h = (-ih\nabla - \mathbf A)^2 + V,
\end{equation}
where $V$ is a double-well potential, and $\mathbf A$ is a vector potential generating a uniform magnetic field of strength $B>0$, i.e. $\nabla \times \mathbf A = B$. Without loss of generality, we choose the gauge
\begin{equation}
A(x,y) = (0,Bx).
\end{equation}
The double-well potential is a sum of two disjoint single wells,
\begin{equation}\label{defV}
V(x,y) = v(x-L,y) + v(x+L,y),
\end{equation}
separated by a distance $2L>0$. On the single well we make the following assumptions.\\

\begin{assumption} \label{assump} We assume that
\begin{itemize}
\item $v$ is smooth, compactly supported, non-positive, and radial.
\item $v$ admits a unique minimum $v_0 <0$ reached at $0$, and it is non-degenerate.
\end{itemize}
\end{assumption}
Let $a>0$ denote the radius of the support of $v$, i.e. the smallest positive number such that $\mathrm{supp}(v) \subset \overline{B}(0,a)$. We exploit the radiality of the single well problem. We denote by $\varphi$ the ground state of the single well Hamiltonian,
\[ \mathcal H_h^{sw} = (-ih\nabla - \mathbf A^{\rm{rad}})^2 + v, \]
in radial gauge $\mathbf A^{\rm{rad}}(x,y) = \frac{B}{2}(-y,x)$. Then $\varphi$ is radial, as explained in \cite[Section 2.2]{HK22}, and the function $u(|X|) = \varphi(X)$ satisfies the equation
\begin{equation}
- h^2 \frac 1 r \partial_r (r \partial_r u) + \Big( \frac{B^2 r^2}{4} + v(r) \Big) u = \mu_h u.
\end{equation}
where $\mu_h$ is the eigenvalue of $\varphi$. Hence, the study of the single-well problem is reduced to a radial Schrödinger equation with effective potential $v_B(r) = \frac{B^2 r^2}{4} + v(r)$. As recalled in \cite{HK22}, this is a very standard setting. The decay of $\varphi$ can be optimally measured in terms of the Agmon distance,
\begin{equation}\label{eq:agm}
d(r_1,r_2) = \int_{r_1}^{r_2} \sqrt{\frac{B^2 r^2}{4} + v(r) - v_0} \dd r.
\end{equation}
Moreover, we have explicit WKB approximations for $\varphi$ (see Lemma \ref{lem:WKB} below). However, the double well problem \emph{is not} directly reduced to a Schrödinger equation with potential $v_B$. If that was the case, the spectral gap would have an exponential rate of decay given by $S_0 = 2d(0,L)$, as in the non-magnetic situation \cite{HS}. Here, consistently with \cite{HK22}, we prove that the spectral gap is much smaller.
 
\begin{theorem}\label{thm}
Let $v$ be a single well potential satisfying Assumption \ref{assump}, and let $\mathcal H_h$ be as in \eqref{defH}, with $V$ given by \eqref{defV}. Also assume that $L > (1+\frac{\sqrt{3}}{2})a$.
Then the spectral gap between the two smallest eigenvalues of $\mathcal H_h$ satisfies
\[ \lambda_2(h) - \lambda_1(h) = 2C(L,B,v) h^{\frac 1 2} e^{-\frac S h } (1+o(1))  \]
as $h \to 0$, for some constant $C(L,B,v) >0$, and with
\begin{equation}\label{eq:defS}
S = 2d(0,L) + \int_0^L \sqrt{\frac{B^2(2L-r)^2}{4} -v_0} - \sqrt{\frac{B^2r^2}{4} - v_0} \dd r. 
\end{equation}
The constant $C(L,B,v)$ can be computed explicitly, even though we have no simple interpretation, see \eqref{eq.249}.
\end{theorem}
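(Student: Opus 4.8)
The plan is to reduce the spectral gap to a single interaction coefficient, to rewrite that coefficient as an oscillatory integral over the line $\{x=0\}$ separating the two wells, and to evaluate it by the complex stationary phase method. First I would set up the usual double-well framework. Let $\mathcal H_h^{r}$ and $\mathcal H_h^{\ell}$ be the operators obtained from $\mathcal H_h$ by keeping only the well $v(x-L,y)$, respectively $v(x+L,y)$; they are intertwined by a magnetic translation, hence share the ground state energy $\mu_h$, and their real, normalized ground states are \emph{exactly}
\[ \varphi_{r}(x,y)=e^{\frac{i}{h}g_{r}(x,y)}\,u\bigl(|(x,y)-(L,0)|\bigr),\qquad \varphi_{\ell}(x,y)=e^{\frac{i}{h}g_{\ell}(x,y)}\,u\bigl(|(x,y)+(L,0)|\bigr), \]
where $u$ is the radial profile of the single well and $g_{r/\ell}(x,y)=\tfrac B2 y(x\pm L)$ is the gauge relating the radial gauge centred at $(\pm L,0)$ to $\mathbf A=(0,Bx)$; note that $g_{r}(0,y)=\tfrac{BLy}{2}=-g_{\ell}(0,y)$. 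Using the antiunitary symmetry $Uf(x,y)=\overline{f(-x,y)}$ of $\mathcal H_h$ to make the two diagonal entries of the effective $2\times2$ matrix coincide, the standard double-well reduction (in the spirit of \cite{HS}, cf.\ \cite{FSW22,HK22}) gives $\lambda_2(h)-\lambda_1(h)=2|w_h|\,(1+o(1))$, where the interaction coefficient is the magnetic Wronskian $w_h=h^2\int_{\R}\bigl(\overline{\varphi_{r}}\,\partial_x\varphi_{\ell}-\varphi_{\ell}\,\overline{\partial_x\varphi_{r}}\bigr)\big|_{x=0}\,\dd y$; its value is independent of which vertical line $\{x=x_0\}$ with $|x_0|<L-a$ one integrates over, since there $V\equiv0$ and $\varphi_{r},\varphi_{\ell}$ both satisfy the free magnetic eigenvalue equation at energy $\mu_h$.

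Next I would turn $w_h$ into an oscillatory integral. Inserting the exact formulas above, together with the identity $\partial_x|(x,y)\pm(L,0)|=\pm L/\rho$ at $x=0$, where $\rho=\rho(y):=\sqrt{L^2+y^2}$, yields the exact expression
\[ w_h=\int_{\R}e^{-\frac{i}{h}BLy}\,u(\rho)\Bigl(ihBy\,u(\rho)+\tfrac{2h^2L}{\rho}\,u'(\rho)\Bigr)\,\dd y. \]
For real $y$ we have $\rho\ge L>a$, so the WKB expansions of Lemma~\ref{lem:WKB}, $u(r)=h^{-1/2}\bigl(\mathfrak a_0(r)+O(h)\bigr)e^{-d(0,r)/h}$ and $u'(r)=-h^{-3/2}d'(0,r)\mathfrak a_0(r)\bigl(1+O(h)\bigr)e^{-d(0,r)/h}$ with $d'(0,r)=\sqrt{B^2r^2/4-v_0}$ for $r>a$, turn this into $w_h=(1+O(h))\int_{\R}G(y)\,e^{-\Phi(y)/h}\,\dd y$, where
\[ \Phi(y)=2\,d\bigl(0,\rho(y)\bigr)+iBLy,\qquad G(y)=\Bigl(iBy-\tfrac{2L}{\rho}\,d'(0,\rho)\Bigr)\mathfrak a_0(\rho)^2. \]
Since $v\equiv0$ for $r>a$, both $u$ (elliptic regularity with analytic coefficients) and $d(0,r)=d(0,a)+\int_a^r\sqrt{B^2s^2/4-v_0}\,\dd s$ are holomorphic there, so the exact integrand, and $\Phi$ and $G$, extend holomorphically to a complex neighbourhood of $\R$ in the variable $y$.

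Then I would run the complex stationary phase. The critical point equation $\Phi'(y)=2d'(0,\rho)\,y/\rho+iBL=0$ has in the lower half-plane the unique solution $y_c=-i\tau$, where $\rho_c:=\sqrt{L^2-\tau^2}$ is the positive root of $B^2\rho_c^4-4v_0\rho_c^2+4v_0L^2=0$, i.e.\ $\rho_c^2=2\bigl(v_0+\sqrt{v_0^2-B^2v_0L^2}\bigr)/B^2$. The hypothesis $L>(1+\tfrac{\sqrt3}{2})a$ is what guarantees that $\rho_c$, and the whole steepest-descent deformation of $\R$ through $y_c$, stays in the region $r>a$ where $v$ vanishes; along that deformation $|\rho(y)|$ remains $>a$, away from the branch cut, so the real-variable WKB remainder bounds are enough to replace the exact integrand by $G\,e^{-\Phi/h}$ near $y_c$. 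One computes $\Phi(y_c)=2d(0,\rho_c)+BL\tau>0$, $\Phi''(y_c)=B^2\rho_c(2L^2-\rho_c^2)/\bigl(2\tau^2 d'(0,\rho_c)\bigr)>0$, and $G(y_c)=-B\rho_c^2\,\mathfrak a_0(\rho_c)^2/\tau\neq0$ (using $2d'(0,\rho_c)\tau=BL\rho_c$), so the saddle-point formula gives $w_h=G(y_c)\sqrt{2\pi h/\Phi''(y_c)}\;e^{-\Phi(y_c)/h}\bigl(1+o(1)\bigr)$, which is real and of the announced form, with $C(L,B,v)=B\rho_c^2\,\mathfrak a_0(\rho_c)^2\sqrt{2\pi/\Phi''(y_c)}/\tau>0$ — this is \eqref{eq.249}. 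It then remains to verify that $\Phi(y_c)=S$: splitting $d(0,\rho_c)$ at $r=a$, substituting $s\mapsto 2L-s$ in the last integral of \eqref{eq:defS}, and using the explicit antiderivative of $\sqrt{B^2s^2/4-v_0}$ together with the identity $\rho_c\sqrt{B^2\rho_c^2/4-v_0}=\sqrt{-v_0}\,L$ (a consequence of the equation defining $\rho_c$), the two expressions for $S$ reduce to the same value after elementary algebra.

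The step I expect to be the main obstacle is the rigour of this last paragraph: one must justify the contour deformation for the \emph{true} eigenfunctions $\varphi_{\ell},\varphi_{r}$, control the difference between them and their WKB approximations uniformly in a complex neighbourhood of the saddle (essentially an analytic-WKB estimate for $u$ near $[\rho_c,L]$), and discard the remainder of the deformed contour using that $\mathrm{Re}\,\Phi$ has a strict minimum at $y_c$ along it. What makes all this possible, and what really forces the hypothesis $L>(1+\tfrac{\sqrt3}{2})a$, is precisely that $v$ vanishes on the whole region swept out by the deformation, so that the WKB data used there are the explicit ones associated with the pure harmonic confinement $B^2r^2/4$.
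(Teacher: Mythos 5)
Your starting point is the same as the paper's: reduce the gap to the hopping coefficient and represent it as the magnetic Wronskian across $\{x=0\}$ (this is exactly \eqref{eq:wnice}), and your exact formula for the integrand at $x=0$ is correct; moreover your complex saddle computation does formally reproduce the right exponent (one can check $2d(0,\rho_c)+BL\tau=S$) and the sign of $w_h$. But the two steps you treat as routine are where the content lies, and both have genuine gaps. First, the reduction ``$\lambda_2-\lambda_1=2|w_h|(1+o(1))$'' is not automatic: the Helffer--Sj\"ostrand $2\times 2$ reduction only gives an \emph{additive} error, and here it is $\mathcal O(h^{-3}e^{-2d(0,2L-a)/h})$ rather than $e^{-2d(0,2L)/h}$, because the left-well problem is radial only up to distance $2L-a$ and the available decay estimates are correspondingly non-optimal (Theorem \ref{thm.split}, Lemma \ref{lem:approx}). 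To conclude you must verify $S<2d(0,2L-a)$, so that this error is $o(h^{1/2}e^{-S/h})$; this is precisely where the hypothesis $L>(1+\frac{\sqrt3}{2})a$ enters (Section \ref{subsec.fin}), and your proposal never performs this comparison.

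Second, your attribution of that hypothesis to the geometry of the saddle is incorrect: with $v_0<0$ one has $\rho_c^2=\frac{2}{B^2}\bigl(v_0+\sqrt{v_0^2+B^2|v_0|L^2}\bigr)\approx \frac{2\sqrt{|v_0|}\,L}{B}$ as $|v_0|\to 0$ with $B,L$ fixed, so under the theorem's assumptions the saddle can satisfy $\rho_c<a$ no matter how large $L/a$ is; the condition $L>(1+\frac{\sqrt3}{2})a$ does \emph{not} keep the steepest-descent path in $\{r>a\}$. What actually makes a deformation legitimate is that on the real line the integrand only involves $u$ on $[L,\infty)\subset(a,\infty)$, where $u$ coincides with the explicit exterior (Kummer-type) solution of the free equation; it is the analytic continuation of that exterior solution — not of $u$ on $(0,a)$, and not a condition on where $v$ vanishes — that the deformed contour sees, and with this interpretation the continued phase has derivative $\sqrt{B^2\rho^2/4-v_0}$ also below $a$, so your identity $\Phi(y_c)=S$ survives. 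But then the ``analytic-WKB estimate near the saddle'' you defer is exactly the quantitative control of this continuation, and the natural rigorous tool for it is the integral representation \eqref{eq:integral} of Lemma \ref{lem:WKB}; once you invoke it you are essentially reproducing the paper's argument, which inserts \eqref{eq:integral} into \eqref{eq:wnice}, performs the $y$-integral as an exact complex-centred Gaussian, and concludes with a real two-dimensional Laplace method, never needing contour deformation or complex bounds on the true eigenfunction. So your route can be completed, but only after (i) adding the error comparison $S<2d(0,2L-a)$ and (ii) replacing the unproven analytic continuation/WKB step by the exact exterior representation (or an equivalent rigorous substitute), and after correcting the claimed role of the hypothesis on $L$.
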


Theorem \ref{thm} is the first result establishing an asymptotic formula for the spectral gap of $\mathcal H_h$. First results in this direction appeared in a general framework in \cite{HS87}, when the magnetic field is weak. Then first bounds on $S$ were obtained in \cite{Nakamura,FSW22}. Recently the sharp exponential decay rate \eqref{eq:defS} was found in \cite{HK22}, where it is proven that
\[h \ln (\lambda_2(h) - \lambda_1(h)) \sim -S.\]  Another related problem is the tunneling between purely magnetic wells, when there is no potential and the confinement is generated by a non-homogeneous magnetic field. This problem was studied in \cite{FMR} in the case of radial wells, using a similar strategy. Finally, other magnetic tunneling estimates have recently been established in the case of Neumann boundary conditions \cite{BHR22}, vanishing magnetic fields \cite{AA23}, or discontinuous magnetic fields \cite{FHK22}. See also \cite{BHR17} in a one-dimensional setting.\\

\textit{Remark 1.} The additional integral term in $S$ is a purely magnetic effect, with no analogue in the non-magnetic case, thus making this model especially interesting. It is directly related to oscillations of the eigenfunctions. Indeed, the eigenfunctions $\varphi_\ell$ and $\varphi_r$ generated by the left and right well respectively are related by a magnetic translation (see \eqref{eq:leftwell}),
\begin{equation}
\varphi_\ell(x,y) = e^{\frac i h BL y} \varphi_r(x+2L,y).
\end{equation}
Thus, they do not oscillate at the same frequency and the relative oscillation is fast as $h \to 0$. This rapidly oscillating phase appears in the hopping coefficient, making it smaller than what one could expect (see Section \ref{sec:hopping}).\\

%\textit{Remark 2.} The result is also uniform with respect to $B$, in the following sense. For all $B_1, \varepsilon > 0$, there exists $h_0(B_1,\varepsilon)>0$ such that, for $h \in (0,h_0)$ and $B \in (0,B_1)$,
%\[ | \lambda_2(h) - \lambda_1(h) - 2C(L,B,v) h^{\frac 1 2} e^{-\frac S h } | \leq \varepsilon \, C(L,B,v) h^{\frac 1 2} e^{-\frac S h }. \]
%In particular, we can let $B \to 0$ and recover the tunneling formula from \cite{HS}. We give some remarks along the proof on how to obtain this uniformity.\\

\textit{Remark 2.} The condition $L > (1+\frac{\sqrt 3}{2})a$ is a limit of our strategy, which relies entirely on the radiality of the single well. Indeed, the problem centered on the left well is only radial up to distance $2L-a$, and not $2L$. This slight difference makes our decay estimates on the eigenfunctions non-optimal (see Lemma \ref{lem:approx}). To overcome this problem, we would need to understand very precisely the non-radial situation.\\

\textit{Remark 3.} We recover the bounds on $S$ stated in \cite{FSW22,HK22} as follows. The result \cite[Theorem 1.2]{HK22} can be translated as
\begin{equation}\label{eq:boundfromHK}
d(0,2L-a) + d(0,a) \leq S \leq d(0,2L),
\end{equation} 
which follows from
\begin{equation}
S- d(0,2L) = \int_0^L \sqrt{\frac{B^2r^2}{4} + v - v_0} - \sqrt{\frac{B^2r^2}{4} - v_0} \dd r \leq 0,
\end{equation}
and
\begin{equation}
S-  \big( d(0,2L-a) + d(0,a) \big) = \int_0^a \sqrt{\frac{B^2 (2L-r)^2}{4}-v_0} - \sqrt{\frac{B^2r^2}{4} - v_0} \dd r \geq 0.
\end{equation}
Moreover, the main result from \cite{FSW22} is
\begin{equation}
BL^2 - BLa \leq S \leq BL^2 + 2 \sqrt{|v_0|} L + \gamma_0,
\end{equation}
which follows from \eqref{eq:boundfromHK} with $\gamma_0 = \int_0^a \sqrt{v-v_0} \dd r$.
\\

\textit{Strategy.} In Section \ref{sec:sw} we describe the single well problem. Most of this section is standard since the ground state solves a radial Schrödinger equation without magnetic field. In Section \ref{eq:dw} we prove that the spectral gap is given by the hopping coefficient. Our proof is inspired by the non-magnetic situation in \cite{DS,HS,Hel88}, and is somewhat simpler that the one of \cite{FSW22,HK22}. Finally, in Section \ref{sec:hopping} we estimate the hopping coefficient, thus proving Theorem \ref{thm}.

\section{The single well problem}\label{sec:sw}

Let $\varphi$ be the normalized ground state for the single well problem in radial gauge,
\begin{equation}\label{eq:singlewell}
(-ih\nabla - \mathbf A^{\rm{rad}})^2 \varphi + v \varphi = \mu_h
 \varphi,
\end{equation}
with 
\begin{equation}
\mathbf A^{\rm{rad}}(x,y) = \frac B 2 (-y,x).
\end{equation}
We first collect the following basic facts about $\varphi$.
\begin{proposition}\label{prop:singlewell}
The ground state energy $\mu_h$ of $(-ih\nabla - \mathbf A^{\rm{rad}})^2$ satisfies
\[ \mu_h = v_0 + h \sqrt{B^2 + 2 v''(0)} + \mathcal O(h^{2}). \]
The associated eigenfunction $\varphi$ is radial and real-valued. Moreover, the first excited eigenvalue $\mu_{h,1}$ satisfies
\[ \mu_{h,1} = v_0 + 3h\sqrt{B^2 + 2 v''(0)} + \mathcal O(h^{2}). \]
\end{proposition}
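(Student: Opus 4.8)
The plan is to use the rotational symmetry of $\mathcal H_h^{sw}$ to reduce the problem to a one-dimensional semiclassical equation, and then to invoke the harmonic approximation. The gauge $\mathbf A^{\rm{rad}} = \frac B2(-y,x)$ is rotation-covariant (the linear map $(x,y)\mapsto(-y,x)$ commutes with planar rotations) and $v$ is radial, so $\mathcal H_h^{sw}$ commutes with the angular momentum $-ih\partial_\theta$. Decomposing $L^2(\R^2) = \bigoplus_{m\in\Z}\mathcal H_m$ with $\mathcal H_m = \{e^{im\theta}g(r)\}$, the operator is, on each $\mathcal H_m$, unitarily equivalent (via $e^{im\theta}g(r)\mapsto g(r)$) to
\[
\mathcal L_m = -h^2\frac1r\partial_r(r\partial_r) + \frac{h^2m^2}{r^2} - hmB + \frac{B^2r^2}{4} + v(r)
\]
on $L^2((0,\infty), r\,\dd r)$. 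For $m=0$ this is $\mathcal L_0 = -h^2\frac1r\partial_r(r\partial_r) + v_B$, i.e. the restriction to radial functions of the \emph{non-magnetic} Schrödinger operator $-h^2\Delta + v_B(|x|)$ on $\R^2$, where $v_B(|x|) = \frac{B^2|x|^2}{4} + v(x)$ is smooth. Since $\frac{B^2|x|^2}{4}\geq 0$ vanishes only at $0$ and $v\geq v_0$ with equality only at $0$, the function $v_B$ has a unique minimum at $0$, equal to $v_0$, and it is non-degenerate with Hessian $(\frac{B^2}{2}+v''(0))\,\mathrm{Id}$. That $\inf\sigma(\mathcal H_h^{sw})$ is attained in the sector $m=0$ — equivalently, that $\varphi$ is radial — is proven in \cite[Section 2.2]{HK22}. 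Then the ground state $u$ of $\mathcal L_0$ is the ground state of the \emph{real} operator $\mathcal L_0$, so it does not change sign and can be taken positive; hence $\varphi(X) = u(|X|)$ is radial and real-valued.

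It remains to compute the two lowest eigenvalues of $\mathcal L_0$, equivalently the rotation-invariant part of the spectrum of $-h^2\Delta + v_B(|x|)$. Here I would quote the standard harmonic approximation for Schrödinger operators with a non-degenerate potential well (see \cite{HS,Hel88,DS}): the low-lying eigenvalues of $-h^2\Delta + v_B(|x|)$ admit complete asymptotic expansions in powers of $h$, with leading terms $v_0 + h\nu_j$ and error $\mathcal O(h^2)$, where $\nu_j$ is the $j$-th eigenvalue of the model oscillator $-\Delta + \frac12(\frac{B^2}{2}+v''(0))|x|^2$. Writing $\omega = \frac12\sqrt{B^2+2v''(0)}$, the spectrum of this model is $\{2\omega(n_1+n_2+1) : n_1,n_2\geq 0\}$, and, since the operator, the model, and the decomposition into angular momentum sectors are compatible, the part corresponding to $m=0$ is $\{2\omega(2k+1) : k\geq 0\}$. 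Hence the eigenvalues of $\mathcal L_0$ are $v_0 + (2k+1)h\sqrt{B^2+2v''(0)} + \mathcal O(h^2)$, $k\geq 0$; taking $k=0$ and $k=1$ yields the asymptotics for the ground energy $\mu_h$ and the first excited energy $\mu_{h,1}$ of the reduced radial operator.

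I expect the only non-routine point to be the radiality of $\varphi$, i.e. that $\inf\sigma(\mathcal H_h^{sw})$ lives in the sector $m=0$. A crude lower bound in a sector $m\neq0$ via $\inf_r\!\big(\frac{h^2m^2}{r^2} - hmB + \frac{B^2r^2}{4} + v\big)$ does not always suffice, since when $v''(0)$ is large this infimum can drop below $v_0 + h\sqrt{B^2+2v''(0)}$. One should instead compare $\mathcal H_h^{sw}$, near the well, with the explicitly diagonalizable magnetic harmonic oscillator $(-ih\nabla - \mathbf A^{\rm{rad}})^2 + v_0 + \frac12 v''(0)|x|^2$, whose bottom equals $v_0 + h\sqrt{B^2+2v''(0)}$, is simple, and is attained in the radial sector; or one may simply invoke \cite{HK22} as the authors do. The remaining steps are textbook one-dimensional semiclassical analysis.
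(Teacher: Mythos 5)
Your proof is correct and follows essentially the same route as the paper: radiality (hence real-valuedness) of the ground state is imported from \cite{HK22}, where it comes from comparison with the explicitly solvable quadratic (Fock--Darwin) model, and the eigenvalue asymptotics are then obtained by harmonic approximation of the effective radial potential $v_B(r)=\frac{B^2r^2}{4}+v(r)$ in the reduced equation \eqref{eq:phirad}, exactly as the paper does. One caveat, which you implicitly acknowledge by speaking of the \emph{reduced radial operator}: the value $v_0+3h\sqrt{B^2+2v''(0)}+\mathcal O(h^{2})$ is the first excited level of the $m=0$ sector only, whereas the first excited eigenvalue of the full planar operator $\mathcal H_h^{sw}$ sits in the $m=1$ sector at $v_0+h\bigl(2\sqrt{B^2+2v''(0)}-B\bigr)+\mathcal O(h^{2})$, so if $\mu_{h,1}$ is read as an eigenvalue of the two-dimensional operator the stated formula must be interpreted in the radial sector --- a discrepancy in the statement rather than in your argument, and harmless later since only an order-$h$ gap is used.
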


A detailed proof of Proposition \ref{prop:singlewell} is given in \cite[Section 2.3]{HK22}. It follows from a harmonic approximation of $v$ near its minimum. Indeed, when $v$ is quadratic, the problem becomes more explicit and we can prove that the minimizer is radial. The harmonic approximation gives radiality when $v$ has a non-degenerate minimium. The function $\varphi$ being radial, equation \eqref{eq:singlewell} can be rewritten in polar coordinates, using the notation $\varphi (X) = u(|X|)$, 
\begin{equation}\label{eq:phirad}
-h^2 \frac 1 r \partial_r \big( r \partial_r u \big) + \frac{B^2r^2}{4}u + vu = \mu_h u.
\end{equation}
This is a radial Schrödinger equation, with effective potential $v_B(r) = \frac{B^2r^2}{4} + v(r)$. From this observation you deduce Proposition \ref{prop:singlewell}, by harmonic approximation of $v_B$ near its minimum (see \cite[Chapter 2]{Hel88} for instance). Moreover, the ground state $\varphi$ has the following WKB approximation (\cite[Theorem 2.3.1]{Hel88}).

\begin{proposition}\label{lem:WKB0}
The ground state $\varphi$ of the single well problem has the following WKB approximation,
\[ \Big| e^{\frac{d(0,|X|)}{h}}\varphi(X) - h^{-\frac 1 2} K(|X|) \Big| = \mathcal O(h^{\frac 1 2}),\]
uniformly on any compact, where $d$ is the Agmon distance introduced in \eqref{eq:agm}, and
\begin{equation}
K(r) = \sqrt{\frac{B^2+2v''(0)}{4\pi}} \exp \Big( - \int_0^{r} \frac{v_B'(s)}{4(v_B(s)-v_0)} + \frac{1}{2s} - \frac{\sqrt{B^2 + 2 v''(0)}}{2\sqrt{v_B(s)-v_0}} \dd s \Big) .
\end{equation}
\end{proposition}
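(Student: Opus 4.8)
\emph{Proof strategy.} Since $\varphi$ is radial it is the ground state of the one-dimensional operator in \eqref{eq:phirad}, i.e.\ of $-h^2(\partial_r^2 + r^{-1}\partial_r) + v_B$ on $L^2((0,\infty), r\,\dd r)$; equivalently, with $w = \sqrt r\, u$, of the $1$D operator $-h^2\partial_r^2 + v_B - \tfrac{h^2}{4r^2}$ on $L^2(\dd r)$. The effective potential $v_B - v_0 = \tfrac{B^2 r^2}{4} + v - v_0$ is non-negative (as $v \geq v_0$), vanishes only at $r = 0$, and is non-degenerate there since $v_B''(0) = \tfrac12(B^2 + 2v''(0)) > 0$. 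This is exactly the setting of the classical WKB theorem for a Schrödinger operator with a potential well, \cite[Theorem 2.3.1]{Hel88}, and I would reprove the statement by combining a WKB quasimode with a spectral comparison, taking care of the two-dimensional normalisation and the mild singularity at $r=0$.

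\emph{WKB quasimode.} I would look for a radial approximate ground state $u_{\mathrm{app}}(r) = h^{-1/2} e^{-d(0,r)/h}\, a(r,h)$, with $a \sim a_0 + h a_1 + \cdots$, and plug it into \eqref{eq:phirad} with $\mu_h$ expanded as $v_0 + h\sqrt{B^2 + 2v''(0)} + \mathcal O(h^2)$ (Proposition \ref{prop:singlewell}). The coefficient of $h^0$ is the eikonal equation $(d')^2 = v_B - v_0$, solved by the Agmon distance \eqref{eq:agm}; the coefficient of $h^1$ is the transport equation
\[ 2 d'(r)\, a_0'(r) + \Big( d''(r) + \frac{d'(r)}{r} - \sqrt{B^2 + 2 v''(0)}\, \Big) a_0(r) = 0 . \]
Using $d' = \sqrt{v_B - v_0}$ and $d'' = v_B'/(2 d')$, this integrates to $a_0$ proportional to $K$; the point is that near $r=0$ the harmonic behaviour $v_B - v_0 \sim \tfrac14(B^2 + 2v''(0))\, r^2$ makes the $1/r$ singularities of the three terms of the integrand defining $K$ cancel, so $K$ extends to a smooth positive function on $[0,\infty)$. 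The proportionality constant is pinned down by normalising $u_{\mathrm{app}}$ in $L^2(\R^2)$ to leading order --- i.e.\ by matching $h^{-1/2} a_0(0)\, e^{-d(0,r)/h}$ near the origin with the normalised Gaussian ground state of the harmonic approximation --- which yields $a_0 = K$. Solving also the subsequent transport equations for $a_1, a_2, \dots$, the resulting $u_{\mathrm{app}}$ satisfies, for a cutoff $\chi$ equal to $1$ on $B(0,R)$ and supported in $B(0,2R)$,
\[ \|(\mathcal H_h^{sw} - \mu_h)(\chi u_{\mathrm{app}})\|_{L^2} = \mathcal O(h^\infty) , \qquad \|\chi u_{\mathrm{app}}\|_{L^2} = 1 + o(1) , \]
the remainder coming either from $\{\nabla\chi \neq 0\}$, where $d(0,\cdot) \geq d(0,R) > 0$, or from the truncation of the WKB series.

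\emph{Comparison and conclusion.} By Proposition \ref{prop:singlewell}, $\mu_h$ is simple and at distance $2h\sqrt{B^2 + 2v''(0)} + \mathcal O(h^2)$ from $\sigma(\mathcal H_h^{sw}) \setminus \{\mu_h\}$, so the spectral theorem gives $\|(1 - \Pi_h)\chi u_{\mathrm{app}}\|_{L^2} = \mathcal O(h^\infty)$ for the rank-one spectral projector $\Pi_h$ at $\mu_h$; since $\varphi$ spans $\mathrm{Ran}\,\Pi_h$ and both $\varphi$ and $\chi u_{\mathrm{app}}$ are positive near $0$, this gives $\|\varphi - \chi u_{\mathrm{app}}\|_{L^2} = \mathcal O(h^\infty)$. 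To upgrade this to the claimed uniform bound I would apply Agmon-type weighted estimates to $w := \varphi - \chi u_{\mathrm{app}}$, which solves an elliptic equation with $\mathcal O(h^\infty)$ right-hand side, controlling $\|e^{(1-\varepsilon)d(0,\cdot)/h} w\|$, and then interior elliptic estimates for the equation satisfied by $e^{d(0,\cdot)/h} w$ to get $\|e^{d(0,\cdot)/h} w\|_{L^\infty(C)} = \mathcal O(h^\infty)$ on any compact $C$ (with $R$ chosen so $C \subset B(0,R)$); since there $e^{d(0,\cdot)/h}\chi u_{\mathrm{app}} = h^{-1/2}K + \mathcal O(h^{1/2})$, the first correction $h^{1/2}a_1$ being bounded on $C$, the stated $\mathcal O(h^{1/2})$ error follows. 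The main obstacle is this last step: turning the soft $L^2$-closeness into the sharp, explicitly-normalised, uniform pointwise asymptotics requires propagating the exponential weight through the elliptic estimates and handling the coordinate singularity of the radial operator at the origin --- exactly what \cite[Theorem 2.3.1]{Hel88} provides, and which I would invoke rather than redo.
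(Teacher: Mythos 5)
Your proposal is correct and follows essentially the same route as the paper: the paper also reduces to the radial equation \eqref{eq:phirad} with effective potential $v_B$ and then simply invokes the standard one-well WKB theorem \cite[Theorem 2.3.1]{Hel88}, which is exactly what you do (after usefully sketching the eikonal/transport construction behind that theorem).
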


%\textit{Remark.} The remainders in Propositions \ref{prop:singlewell} and \ref{lem:WKB0} are uniform with respect to $B \in (0,B_1)$, for any $B_1 >0$. Indeed, once radiality of the ground state is established, these results follow from the well known WKB approximation \cite{Hel88} for a non-magnetic Schrodinger operator with potential $v_B(r) = \frac{B^2r^2}{4} + v(r)$. The remainders in these results are bounded by second (or higher) order derivatives of $v_B$ which are uniformly bounded with respect to $B$.\\

As observed in \cite[Equation (2.9)]{FSW22}, the function $\varphi$ also has an explicit integral formula outside the support of $v$, since equation \eqref{eq:phirad} is related to some special functions.

\begin{lemma}\label{lem:WKB}
The normalized ground state $\varphi$ of the single well problem, solution of \eqref{eq:singlewell}, is radial. Moreover, it has the following integral formula for $|X| >a$,
\begin{equation}\label{eq:integral}
\varphi(X) = C_h \exp \Big( - \frac{B |X|^2}{4h} \Big) \int_0^\infty \exp \Big( - \frac{B|X|^2 t}{2h} \Big) t^{\alpha-1} (1+t)^{-\alpha} \dd t,
\end{equation}
where $\alpha = \frac 1 2 - \frac{\mu_h}{2Bh}$, and $C_h$ is a normalization constant. Moreover, 
\begin{equation}\label{eq:Ch}
C_h \sim h^{-1} K(L) \sqrt{\frac{f''(t_L)}{2\pi}} t_L^{1-\nu}(1+t_L)^{\nu} e^{\frac{\tilde{d}(L) -  d(0,L)}{h}} \quad \text{as} \quad h \to 0,
\end{equation}
where $\tilde d(L) = \int_0^L \sqrt{\frac{B^2r^2}{4} - v_0} \dd r$, the Agmon distance $d(0,L)$ was defined in \eqref{eq:agm}, $t_L$ and $f''(t_L)$ are given in \eqref{eq.tL0}, and $\nu = \frac 1 2 -  \frac{\sqrt{B^2 + 2v''(0)}}{2B}$.
\end{lemma}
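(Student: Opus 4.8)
I would prove the lemma in two parts; radiality and reality of $\varphi$ are already contained in Proposition~\ref{prop:singlewell}, so only the two displayed formulas need an argument. For the integral formula, note that $v(r)=0$ when $r=|X|>a$, so by \eqref{eq:phirad} the function $u(r)=\varphi(X)$ solves the purely magnetic radial equation $-h^2(u''+r^{-1}u')+\tfrac{B^2r^2}{4}u=\mu_h u$ on $(a,\infty)$. The substitution $z=\tfrac{Br^2}{2h}$ together with $u=e^{-z/2}w$ turns this into Kummer's confluent hypergeometric equation $z\ddot w+(1-z)\dot w-\alpha w=0$ with $\alpha=\tfrac12-\tfrac{\mu_h}{2Bh}$. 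Since $\varphi\in L^2(\R^2)$, $u$ cannot involve the solution branch growing like $e^z$ (which would make $u$ blow up like $e^{Br^2/(4h)}$), so $w$ is, up to a multiplicative constant, Tricomi's function $U(\alpha,1,z)$. By Proposition~\ref{prop:singlewell} one has $\alpha=\tfrac{-v_0}{2Bh}+\nu+\mathcal O(h)>0$ for $h$ small, so the Euler representation $U(\alpha,1,z)=\Gamma(\alpha)^{-1}\int_0^\infty e^{-zt}t^{\alpha-1}(1+t)^{-\alpha}\,\dd t$ applies; absorbing the $z$-independent factors into $C_h$ yields \eqref{eq:integral}, recovering \cite[Eq.~(2.9)]{FSW22}.

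For the asymptotics \eqref{eq:Ch}, the plan is to evaluate \eqref{eq:integral} at the fixed point $|X|=L$ (legitimate since $L>a$) and match it with the WKB expansion $\varphi(X)=h^{-1/2}K(L)e^{-d(0,L)/h}(1+\mathcal O(h))$ of Proposition~\ref{lem:WKB0}. This reduces everything to the large-$1/h$ asymptotics of $I_h:=\int_0^\infty e^{-BL^2t/(2h)}t^{\alpha-1}(1+t)^{-\alpha}\,\dd t$. Inserting $\alpha=\tfrac{-v_0}{2Bh}+\nu+\mathcal O(h)$, I would rewrite the integrand as $t^{\nu-1}(1+t)^{-\nu}\big(\tfrac{t}{1+t}\big)^{\mathcal O(h)}\,e^{f(t)/h}$ with $f(t)=-\tfrac{BL^2t}{2}+\tfrac{-v_0}{2B}\ln\tfrac{t}{1+t}$; this $f$ is smooth on $(0,\infty)$, tends to $-\infty$ at both endpoints, has a unique critical point $t_L$ — the positive root of $t(1+t)=\tfrac{-v_0}{B^2L^2}$ — and satisfies $f''(t_L)<0$. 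Laplace's method then gives $I_h\sim t_L^{\nu-1}(1+t_L)^{-\nu}\sqrt{2\pi h/|f''(t_L)|}\,e^{f(t_L)/h}$, and solving the matching relation for $C_h$ produces $C_h\sim h^{-1}K(L)\,t_L^{1-\nu}(1+t_L)^{\nu}\sqrt{|f''(t_L)|/(2\pi)}\,e^{(\frac{BL^2}{4}-f(t_L)-d(0,L))/h}$.

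It then remains to identify the exponent, i.e.\ to check $\tfrac{BL^2}{4}-f(t_L)=\tilde d(L)$. Using $t_L(1+t_L)=\tfrac{-v_0}{B^2L^2}$ to eliminate $v_0$, one rewrites $\tfrac{BL^2}{4}-f(t_L)=\tfrac{BL^2}{4}(1+2t_L)+\tfrac{-v_0}{2B}\ln\tfrac{1+t_L}{t_L}$, while the substitution $r=\tfrac{2\sqrt{-v_0}}{B}\sinh\theta$ evaluates $\tilde d(L)=\int_0^L\sqrt{\tfrac{B^2r^2}{4}-v_0}\,\dd r$ to the same closed form. This gives \eqref{eq:Ch}, with $t_L$ and $f''(t_L)$ as above.

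The hard part is the asymptotic evaluation of $I_h$. The subtlety is that the amplitude $t^{\alpha-1}(1+t)^{-\alpha}$ is itself exponentially large or small in $h$ (because $\alpha\sim -v_0/(2Bh)$), so Laplace's method applies only after this factor has been merged into the phase $h^{-1}f$; moreover the leftover sub-exponential factor $\big(\tfrac{t}{1+t}\big)^{\mathcal O(h)}$ coming from the $\mathcal O(h)$ correction of $\alpha$ is not uniformly close to $1$ near $t=0$ and $t=\infty$ and has to be dominated there by the decay of $e^{f(t)/h}$, using $f(t)\to-\infty$ at the endpoints and the Gaussian concentration at $t_L$. By contrast, identifying the special function in the first part and the closed-form identity $\tfrac{BL^2}{4}-f(t_L)=\tilde d(L)$ are routine, if slightly opaque, computations.
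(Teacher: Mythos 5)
Your proposal is correct and follows essentially the same route as the paper: the integral formula comes from reducing \eqref{eq:phirad} outside $\mathrm{supp}(v)$ to a Kummer-type equation (the paper simply cites \cite{FSW22} where you derive the Tricomi solution and its Euler representation), and $C_h$ is obtained exactly as in the paper by matching the WKB value of Proposition \ref{lem:WKB0} at $|X|=L$ with a Laplace-method evaluation of the integral after absorbing the exponentially large part of $t^{\alpha-1}(1+t)^{-\alpha}$ into the phase. Your bookkeeping (keeping $\frac{BL^2}{4}$ outside the phase, so $t_\star$ is a maximum of your $f$ rather than a minimum of the paper's) is only cosmetic, and your explicit verification of $\frac{BL^2}{4}-f(t_L)=\tilde d(L)$ and your attention to the $(t/(1+t))^{\mathcal O(h)}$ factor near the endpoints supply details the paper states without proof.
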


\begin{proof}
Note that $\varphi$ satisfies equation \eqref{eq:phirad} which can be solved by special functions outside the support of $v$ (It is related to Kummer functions, see \cite{FSW22}). To estimate $C_h$, we combine the WKB approximation from Proposition \ref{lem:WKB0},
\begin{equation}\label{eq:varphia}
\varphi(L) = h^{- \frac 1 2} K(L) e^{-\frac{d(0,L)}{h}}(1+o(1)),
\end{equation}
with an estimate of \eqref{eq:integral} using the Laplace method. Indeed, using that
\begin{equation}\label{eq.alpha0}
\alpha = \frac{|v_0|}{2Bh} + \nu + o(1),
\end{equation}
we find
\begin{equation}
\varphi(L) = C_h \int_0^\infty \exp \Big( - \frac{f(t)}{h} \Big) t^{\nu-1}(1+t)^{-\nu} \dd t \, \big( 1+  o(1)\big),
\end{equation}
with
\begin{equation}
f(t) = \frac{B L^2}{4}(1+2 t) + \frac{|v_0|}{2B} \ln \Big( \frac{1+t}{t} \Big).
\end{equation}
The first and second derivatives of $f$ are
\begin{equation}
f'(t) = \frac{BL^2}{2} - \frac{|v_0|}{2B} \frac{1}{t(1+t)}, \qquad f''(t) = \frac{|v_0|}{2B} \frac{2t+1}{t^2(1+t)^2}.
\end{equation}
In particular, $f$ has a unique critical point $t_L>0$, which is also a global minimum,
\begin{equation}\label{eq.tL0}
t_L = \frac 1 2 \Big( -1 + \sqrt{1 + \frac{4 |v_0|}{B^2 L^2}} \, \Big), \quad \text{with} \quad f''(t_L) = \frac{B^2L^3}{2|v_0|} \sqrt{B^2 L^2 + 4 |v_0|}.
\end{equation}
By the Laplace method, we deduce
\begin{equation}\label{eq.phiL3}
\varphi(L) = C_h \sqrt{\frac{2\pi h}{f''(t_L)}} e^{- \frac{f(t_L)}{h}} t_L^{\nu -1} (1+t_L)^{-\nu} \big( 1 + o(1) \big).
\end{equation}
Also note that $f(t_L) = \tilde d(L)$. 
We combine this with \eqref{eq:varphia} to get the estimate on $C_h$.
\end{proof}

%\textit{Remark.} To get uniformity with respect to $B \in (0,B_1)$ in \eqref{eq:Ch}, we check uniformity of the Laplace method \eqref{eq.phiL3}. The relative error $o(1)$ in the Laplace method is of order $\sqrt{h}$, with coefficient bounded by derivatives of $f$ of the form
%\[ f^{(4)} / (f'')^2 \quad \text{and} \quad (f^{(3)})^2 / (f'')^3,\]
%which are independent of $B$.

\section{The double well problem}\label{eq:dw}

In order to study the double well problem, we follow the strategy of \cite{HS} and compute the matrix of $\mathcal H_h$ in a basis $(\varphi_\ell,\varphi_r)$, where $\varphi_\ell$ (resp. $\varphi_r$) is the ground state generated by the left well (resp. the right well). More precisely, we define $\varphi_\ell$ and $\varphi_r$ as the normalized solutions to
\begin{align}
(-ih\nabla - \mathbf A)^2 \varphi_\ell + v(x+L,y)\varphi_\ell = \mu_h \varphi_\ell, \label{eq:leftwell}\\
(-ih\nabla - \mathbf A)^2 \varphi_r + v(x-L,y)\varphi_r = \mu_h \varphi_r, \label{eq:rightwell}
\end{align}
respectively. Note that $\varphi_\ell$ and $\varphi_r$ are related to the radial solution $\varphi$ of the single-well problem \eqref{eq:singlewell} as follows. Let us denote by $\mathbf A^\ell$ and $\mathbf A^r$ the radial gauges centered at $(-L,0)$ and $(L,0)$ respectively, namely
\begin{align*}
\mathbf A^\ell(x,y) = \frac B 2 (-y, x+L), \qquad \mathbf A^r(x,y) = \frac B 2 (-y,x-L). 
\end{align*}
We define the two functions $\sigma_\ell$ and $\sigma_r$ by
\begin{equation}
\sigma_\ell(x,y) = \frac B  2 y (L-x), \qquad \sigma_r(x,y) = - \frac{B}{2} y (L+x).
\end{equation}
They satisfy
\begin{equation}
\nabla \sigma_\ell = \mathbf A^\ell - \mathbf A, \qquad \nabla \sigma_r = \mathbf A^r - \mathbf A, \qquad \text{and} \qquad \sigma_\ell - \sigma_r = BL y.
\end{equation}
Then $\varphi_\ell$ and $\varphi_r$ are related to $\varphi$ by a magnetic translation,
\begin{align}
\varphi_\ell(x,y) = e^{-\frac{i \sigma_\ell}{h}} \varphi(x+L,y), \qquad
\varphi_r(x,y) = e^{-\frac{i \sigma_r}{h}} \varphi(x-L,y).
\end{align}
The difference between the two smallest eigenvalues of $\mathcal H_h$ can be estimated using $\varphi_\ell$ and $\varphi_r$ through the hopping coefficient, as stated in the following theorem, which can also be found in \cite[Section 4]{FSW22}.

\begin{theorem}\label{thm.split}
The two smallest eigenvalues of $\mathcal H_h$ satisfy
\begin{align*}
 \lambda_1 &= \mu_h - |w_h| +   \mathcal O \big( h^{-3} e^{-\frac{2d(0,2L-a)}{h}} \big), \\
\lambda_2 &= \mu_h + |w_h| +   \mathcal O \big( h^{-3} e^{-\frac{2d(0,2L-a)}{h}} \big), 
\end{align*}
where 
\begin{equation}
w_h = \int_{\R^2} v(x+L,y) \varphi_\ell(x,y) \overline{\varphi_r(x,y)} \dd x \dd y.
\end{equation}
\end{theorem}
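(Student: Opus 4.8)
The plan is to run the classical Helffer--Sjöstrand double-well scheme \cite{HS,Hel88}, adapted to the magnetic setting: reduce $\mathcal H_h$ on its low-lying spectral subspace to an explicit $2\times2$ matrix built from the pair $(\varphi_\ell,\varphi_r)$. The first ingredient is that $\varphi_\ell$ and $\varphi_r$ are normalized quasimodes for $\mathcal H_h$. Since $\mathcal H_h=(-ih\nabla-\mathbf A)^2+v(\cdot-L)+v(\cdot+L)$ and $\varphi_\ell$ solves \eqref{eq:leftwell}, one has $(\mathcal H_h-\mu_h)\varphi_\ell=v(x-L,y)\,\varphi_\ell$, which is supported in the disk $\overline B((L,0),a)$. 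Every point of that disk lies at Euclidean distance at least $2L-a$ from the centre $(-L,0)$ of the left well, so Proposition \ref{lem:WKB0} (applied on the compact annulus $\{2L-a\le|X|\le 2L+a\}$, in agreement with the integral formula of Lemma \ref{lem:WKB}) together with the monotonicity of $r\mapsto d(0,r)$ gives $|\varphi_\ell|\le Ch^{-1/2}e^{-d(0,2L-a)/h}$ on that disk, hence
\[
\|(\mathcal H_h-\mu_h)\varphi_\ell\|=\|v(\cdot-L)\varphi_\ell\|\le Ch^{-1/2}e^{-d(0,2L-a)/h},
\]
and symmetrically for $\varphi_r$. Obtaining the \emph{sharp} rate $d(0,2L-a)$ here is the one place where the decay of $\varphi$ beyond $\mathrm{supp}(v)$ is genuinely used.

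Second, I would locate the relevant part of the spectrum. Since $\mathrm{span}(\varphi_\ell,\varphi_r)$ is almost orthonormal and $\mathcal H_h$ acts as $\mu_h+$(exponentially small) on it, the min-max principle gives $\lambda_2(h)\le\mu_h+o(h)$. For a lower bound on $\lambda_3(h)$ I would use an IMS localization formula with a partition of unity subordinate to the two disks and their complement: on each disk $\mathcal H_h$ is, up to $\mathcal O(h^2)$ and a gauge, a single radial well, whose first two eigenvalues are $\mu_h=v_0+h\sqrt{B^2+2v''(0)}+\mathcal O(h^2)$ and $\mu_{h,1}=v_0+3h\sqrt{B^2+2v''(0)}+\mathcal O(h^2)$ by Proposition \ref{prop:singlewell}; on the complement $V\equiv 0$, so there $\mathcal H_h=(-ih\nabla-\mathbf A)^2\ge Bh$. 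Since $\mu_{h,1}<Bh$ for small $h$, this yields $\lambda_3(h)\ge\mu_h+c_0h$ for some $c_0>0$, so $\mathcal H_h$ has exactly two eigenvalues in $[\mu_h-c_0h/2,\mu_h+c_0h/2]$ and $\operatorname{dist}(\mu_h,\sigma(\mathcal H_h)\setminus\{\lambda_1,\lambda_2\})\ge c_0h/2$.

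Now let $\Pi$ be the rank-two spectral projection of $\mathcal H_h$ onto $\{\lambda_1,\lambda_2\}$, and put $\psi_\bullet:=\Pi\varphi_\bullet$, $\rho_\bullet:=(1-\Pi)\varphi_\bullet$; the quasimode bound and the spectral gap give $\|\rho_\bullet\|\le Ch^{-3/2}e^{-d(0,2L-a)/h}$, so $(\psi_\ell,\psi_r)$ is a basis of $\mathrm{Ran}\,\Pi$ for small $h$. I would then compute the Gram matrix $G=(\langle\psi_i,\psi_j\rangle)_{i,j\in\{\ell,r\}}$ and $M=(\langle\mathcal H_h\psi_i,\psi_j\rangle)$. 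Using $\Pi\mathcal H_h=\mathcal H_h\Pi$, the relation $\mathcal H_h\varphi_r=\mu_h\varphi_r+v(\cdot+L)\varphi_r$, and self-adjointness of $\mathcal H_h$, one gets the identity
\[
\langle\mathcal H_h\varphi_\ell,\varphi_r\rangle=\mu_h\langle\varphi_\ell,\varphi_r\rangle+\int_{\R^2}v(x+L,y)\,\varphi_\ell(x,y)\,\overline{\varphi_r(x,y)}\,\dd x\,\dd y=\mu_h\langle\varphi_\ell,\varphi_r\rangle+w_h,
\]
and, expanding everything in $\rho_\bullet$, every term carrying a factor $\rho_\bullet$ or a factor $v(\cdot\mp L)\varphi_\bullet$ is $\mathcal O(h^{-3}e^{-2d(0,2L-a)/h})$ (the worst being $\langle\rho_\ell,\rho_r\rangle$). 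Thus $M-\mu_h G$ has diagonal entries $\mathcal O(h^{-2}e^{-2d(0,2L-a)/h})$ and off-diagonal entry $w_h+\mathcal O(h^{-3}e^{-2d(0,2L-a)/h})$, while $G=\mathrm{Id}+\mathcal O\big(|\langle\varphi_\ell,\varphi_r\rangle|+h^{-3}e^{-2d(0,2L-a)/h}\big)$. The eigenvalues of $\mathcal H_h|_{\mathrm{Ran}\,\Pi}$ — which are $\lambda_1,\lambda_2$ — are the roots $\lambda$ of $\det\big((M-\mu_h G)-(\lambda-\mu_h)G\big)=0$; the crucial point is that the a priori leading off-diagonal $\mu_h\langle\varphi_\ell,\varphi_r\rangle$ of $M$ is exactly $\mu_h G_{\ell r}$ and is cancelled, leaving $w_h$ as the effective coupling. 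Solving this scalar quadratic in $\lambda-\mu_h$, and checking that the residual corrections from $G\neq\mathrm{Id}$, of sizes $\mathcal O(|w_h|\,|\langle\varphi_\ell,\varphi_r\rangle|)$ and $\mathcal O(|\langle\varphi_\ell,\varphi_r\rangle|^2)$, are absorbed into $\mathcal O(h^{-3}e^{-2d(0,2L-a)/h})$, gives $\lambda_1=\mu_h-|w_h|+\mathcal O(h^{-3}e^{-2d(0,2L-a)/h})$ and $\lambda_2=\mu_h+|w_h|+\mathcal O(h^{-3}e^{-2d(0,2L-a)/h})$.

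I expect the main obstacle to be the sharp control of the overlap $\langle\varphi_\ell,\varphi_r\rangle=\int_{\R^2}e^{-iBLy/h}\varphi(x+L,y)\varphi(x-L,y)\,\dd x\,\dd y$ (and, more generally, of all quantities that must be shown negligible against $h^{-3}e^{-2d(0,2L-a)/h}$). Here the rapidly oscillating magnetic phase $e^{-iBLy/h}$ has to be used — through a complex Laplace/stationary-phase estimate, shifting the $y$-contour into the region where the $y$-part of the combined phase is maximized — to see that this overlap decays faster than the naive bound $e^{-2d(0,L)/h}$ suggests; without this the error in the final formula would exceed the claimed size. The second, more bookkeeping-type, difficulty is to organize the reduction so that the non-orthogonality of $(\varphi_\ell,\varphi_r)$ enters only through $G$, whose off-diagonal then cancels, so that the splitting is governed by $w_h$ and not by the a priori larger quantity $\mu_h\langle\varphi_\ell,\varphi_r\rangle+w_h$.
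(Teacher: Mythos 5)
Your proposal is correct and follows essentially the same route as the paper: the quasimode bound $\|v(\cdot-L,\cdot)\varphi_\ell\|\lesssim h^{-1/2}e^{-d(0,2L-a)/h}$, the rough localization $\lambda_3-\mu_h\geq ch$ (Lemma \ref{lem.superposition}, whose IMS proof the paper omits), the projection estimate of Lemma \ref{lem:approx}, and the $2\times 2$ reduction in which the term $\mu_h\langle\varphi_\ell,\varphi_r\rangle$ cancels exactly, leaving $w_h$ as the coupling. Working with $\det(M-\lambda G)=0$ instead of the Gram--Schmidt basis \eqref{eq.defpsihat} is only cosmetic, and your use of the eigenvalue equation together with $[\Pi,\mathcal H_h]=0$ to control the $\rho$-terms even lets you bypass the gradient estimate in Lemma \ref{lem:approx}.

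The one point where you deviate is your final paragraph: the paper never estimates the overlap $\epsilon:=\langle\varphi_\ell,\varphi_r\rangle$ sharply, and no contour-shift or stationary-phase argument for $\epsilon$ is used (the oscillating phase is exploited only in Section \ref{sec:hopping}, in the evaluation of $w_h$ itself). The reason this is not needed is the cancellation structure you already identified, pushed one step further: after orthonormalization the off-diagonal entry is $w_h\big(1+\mathcal O(|\epsilon|^2)\big)+\mathcal O\big(h^{-3}e^{-2d(0,2L-a)/h}\big)$, while the first-order-in-$\epsilon$ term, of size $|w_h||\epsilon|$ (your $G\neq\mathrm{Id}$ correction), appears only in one diagonal entry, i.e. it shifts the trace and enters the gap $\lambda_2-\lambda_1$ only quadratically. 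For the quadratic terms the crude (non-oscillatory) Agmon bound $|\epsilon|\lesssim \mathrm{poly}(h^{-1})e^{-2d(0,L)/h}$ suffices, because $4d(0,L)\geq d(0,2L-a)+Ba(L-a)$ always: indeed $3d(0,L)-d(L,2L-a)\geq 3\int_a^L\sqrt{\tfrac{B^2r^2}{4}+|v_0|}\,\dd r-\int_L^{2L-a}\sqrt{\tfrac{B^2r^2}{4}+|v_0|}\,\dd r\geq Ba(L-a)>0$. So your claim that \emph{without} a refined bound on $\epsilon$ ``the error in the final formula would exceed the claimed size'' is not accurate for the splitting, which is all that Theorem \ref{thm} requires; the only place it has substance is the common trace shift of size $|w_h||\epsilon|$ if one insists on the individual-eigenvalue remainders exactly as stated in all parameter regimes --- a detail the paper's proof also passes over, and one you could handle either by your refined overlap estimate or simply by noting it cancels in $\lambda_2-\lambda_1$. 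In short: your scheme is the paper's scheme, but the extra analytic work you anticipate on $\langle\varphi_\ell,\varphi_r\rangle$ is not part of, nor needed for, the argument.
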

The proof of Theorem \ref{thm.split} is a standard application of the Helffer-Sjöstrand strategy (see \cite[Theorem 3.9]{HS} or \cite{FMR,Hel88}). For the reader's convenience, we recall the main ideas in Sections \ref{sec.approx} and \ref{sec.proofthm2} below.

\subsection{An approximation lemma}\label{sec.approx}

First of all, the Agmon estimates give exponential localization of the eigenfunctions of $\mathcal H_h$ inside the wells. This localization is enough to prove that the spectrum of the double-well operator is the superposition of the spectra of the one-well operators, modulo $\mathcal O(h^\infty)$ (as in \cite{HS,Hel88}). The proof of this standard result is omitted.

\begin{lemma}\label{lem.superposition}
There exists $c,h_0>0$ such that, for $h \in (0,h_0)$,
\[ | \lambda_1(h) - \mu_h | = \mathcal O(h^\infty), \quad  | \lambda_2(h) - \mu_h | = \mathcal O(h^\infty) ,\]
and $\lambda_3(h) - \lambda_1(h) \geq ch$.
\end{lemma}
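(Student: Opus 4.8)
The plan is to establish Lemma~\ref{lem.superposition} by the standard Agmon-localization argument adapted to the magnetic setting. First I would prove Agmon estimates for the eigenfunctions of $\mathcal H_h$: if $\psi$ is a normalized eigenfunction with eigenvalue $\lambda(h) \to v_0$, then for any $\epsilon > 0$,
\[ \int_{\R^2} e^{\frac{2(1-\epsilon)\Phi(x,y)}{h}} \big( |\psi|^2 + |(-ih\nabla - \mathbf A)\psi|^2 \big) \dd x \dd y = \mathcal O(1), \]
where $\Phi$ is the Agmon distance to the set of two wells $\{(\pm L, 0)\}$ associated with the effective potential $\min(V - v_0, \text{something})$; concretely one uses $\Phi(x,y) = \min\big( d_V((-L,0),(x,y)), d_V((L,0),(x,y)) \big)$ with $d_V$ the Agmon distance for $V$. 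The proof is the usual integration-by-parts identity (the ``IMS'' or Agmon trick) applied to $e^{\Phi/h}\psi$, using the diamagnetic-type lower bound $\mathrm{Re}\langle (-ih\nabla-\mathbf A)(e^{\Phi/h}\psi), (-ih\nabla-\mathbf A)(e^{\Phi/h}\psi)\rangle$ and the fact that $V - v_0 - |\nabla\Phi|^2 \geq 0$ outside a neighborhood of the wells; the magnetic vector potential $\mathbf A$ plays no role here since the weight $e^{\Phi/h}$ is real and commutes through, so the argument is verbatim the non-magnetic one of \cite{Hel88,HS}.

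Next I would use these Agmon estimates to compare the double-well operator with the direct sum of the two single-well operators. The key quantitative input is that, by Assumption~\ref{assump} and the hypothesis on $L$, the distance between the supports of the two wells is positive, so the Agmon distance from one well to the support of the \emph{other} well's potential is bounded below by a positive constant; this yields $\mathcal O(h^\infty)$ (in fact exponentially small) bounds on all the relevant cross terms. Constructing quasimodes from $\varphi_\ell$ and $\varphi_r$ (suitably cut off) gives, via the spectral theorem, two approximate eigenvalues of $\mathcal H_h$ within $\mathcal O(h^\infty)$ of $\mu_h$, hence $\lambda_1(h), \lambda_2(h) = \mu_h + \mathcal O(h^\infty)$ from above. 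Conversely, using a quadratic partition of unity $\chi_\ell^2 + \chi_r^2 + \chi_0^2 = 1$ adapted to the two wells, the IMS localization formula
\[ \mathcal H_h = \chi_\ell \mathcal H_h \chi_\ell + \chi_r \mathcal H_h \chi_r + \chi_0 \mathcal H_h \chi_0 - h^2\big( |\nabla\chi_\ell|^2 + |\nabla\chi_r|^2 + |\nabla\chi_0|^2\big) \]
together with the single-well lower bounds (Proposition~\ref{prop:singlewell}: ground energy $\mu_h$, spectral gap $\sim 2h\sqrt{B^2+2v''(0)}$ to the next level) and the fact that $\chi_0$ is supported where $V - v_0 \geq c_0 > 0$ shows that $\mathcal H_h$ has at most two eigenvalues below $\mu_h + \frac{3}{2}h\sqrt{B^2+2v''(0)} - \mathcal O(h^2)$, and the third one is $\geq \mu_h + ch$. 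Combining the upper and lower bounds gives $|\lambda_j(h) - \mu_h| = \mathcal O(h^\infty)$ for $j=1,2$ and $\lambda_3(h) - \lambda_1(h) \geq ch$.

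The main obstacle, and the only point requiring genuine care rather than bookkeeping, is controlling the Agmon weight near the ``false well'': the operator centered on the left well is only radial up to distance $2L - a$ (see Remark~2), so one does not have the exact WKB decay rate $d(0,2L)$ available — only $d(0,2L-a)$, which is what appears in the error term of Theorem~\ref{thm.split}. For the present lemma, however, this is harmless: we only need that $\lambda_j(h) - \mu_h$ is smaller than any power of $h$, and that follows already from the fact that the wells have disjoint supports at positive distance, so the crude choice $\Phi = $ Agmon distance to the two well-\emph{points} (rather than to the well supports) suffices and gives exponential smallness. The subtlety of the non-optimal rate $d(0,2L-a)$ versus $d(0,2L)$ only matters in Theorem~\ref{thm.split} and Theorem~\ref{thm}, not here. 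I would therefore simply cite \cite[Section 3]{HS} or \cite[Chapter 6]{Hel88} for the details, noting that the magnetic field is inert in all these estimates because the conjugation weight is real-valued.
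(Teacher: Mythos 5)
Your proposal is correct and follows exactly the route the paper has in mind: the paper omits the proof of this lemma as standard, stating only that Agmon localization reduces the low-lying spectrum of the double-well operator to the superposition of the single-well spectra modulo $\mathcal O(h^\infty)$, as in \cite{HS,Hel88}, which is precisely your Agmon-estimate plus quasimode/IMS argument (with the harmonic-approximation gap of Proposition \ref{prop:singlewell} giving $\lambda_3-\lambda_1\geq ch$). The only point to keep straight in a written-out version is that the $\mathcal O(h^\infty)$ lower bound $\lambda_1\geq\mu_h-\mathcal O(h^\infty)$ requires feeding the Agmon decay of the double-well eigenfunctions into the localization errors (or cutting the eigenfunctions into single-well quasimodes), since the crude IMS bound with fixed cutoffs only yields $\mu_h-\mathcal O(h^2)$; your sketch contains this ingredient, so this is a matter of assembly, not a gap.
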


Let $\Psi_1$, $\Psi_2$ be the two first eigenfunctions of $\mathcal H_h$, and $\Pi$ the spectral projector on $\mathrm{Ran}( \Psi_1,\Psi_2)$. The lemma below shows that the single well states $\varphi_\ell$ and $\varphi_r$ are close to this eigenspace.

\begin{lemma}\label{lem:approx} There exists a $C>0$ such that, for $h$ small enough and $j = \ell, r$ we have
\begin{equation*}
\| \varphi_j - \Pi \varphi_j \| \leq C h^{-\frac 3 2} e^{- \frac{d(0,2L-a)}{h}}, \quad \| (-ih\nabla - \mathbf A) (\varphi_j - \Pi \varphi_j) \| \leq Ch^{-\frac 32} e^{-\frac{d(0,2L-a)}{h}}.
\end{equation*}
\end{lemma}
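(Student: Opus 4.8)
The strategy is the standard Helffer–Sjöstrand argument: show that $\varphi_j$ is an approximate eigenfunction of $\mathcal H_h$ with error controlled by the Agmon decay of the single-well ground state, and then use a spectral-gap estimate to conclude that $\varphi_j$ is close to the spectral subspace $\mathrm{Ran}\,\Pi$. I will carry this out for $j = r$; the case $j = \ell$ is symmetric. The key point is that $\varphi_r$ solves \eqref{eq:rightwell} exactly, and the full potential $V$ differs from the right single-well potential $v(x-L,y)$ precisely by $v(x+L,y)$, the left bump. Hence
\begin{equation*}
(\mathcal H_h - \mu_h)\varphi_r = \big(V - v(\cdot - L, \cdot)\big)\varphi_r = v(x+L,y)\,\varphi_r(x,y),
\end{equation*}
so I only need to estimate the right-hand side in $L^2$.

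Next I estimate $\| v(x+L,y)\,\varphi_r \|$. Since $v(\cdot+L,\cdot)$ is supported in $\overline B((-L,0),a)$, the product $v(x+L,y)\varphi_r(x,y)$ is supported where $|(x,y)-(-L,0)| \le a$, i.e. where the distance to the center $(L,0)$ of the right well is at least $2L - a$. Using $|\varphi_r| = |\varphi(\cdot - L, \cdot)|$ together with the WKB/Agmon upper bound for $\varphi$ from Proposition \ref{lem:WKB0} (which gives $|\varphi(X)| \le C h^{-1/2} e^{-d(0,|X|)/h}$ on the relevant compact region, and $d$ increasing in $|X|$), one gets
\begin{equation*}
\| v(x+L,y)\,\varphi_r \| \le C\, h^{-1/2} e^{-\frac{d(0,2L-a)}{h}}.
\end{equation*}
This is the step where the suboptimal exponent $d(0,2L-a)$ rather than $d(0,2L)$ enters, exactly as flagged in Remark 2: the support of the bump reaches a point at distance only $2L - a$ from the opposite well's center. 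Then the spectral theorem gives $\| (\mathrm{Id} - \Pi)\varphi_r \| \le \mathrm{dist}(\mu_h, \sigma(\mathcal H_h) \setminus \{\lambda_1,\lambda_2\})^{-1} \| (\mathcal H_h - \mu_h)\varphi_r \|$, and by Lemma \ref{lem.superposition} this distance is bounded below by a constant times $h$ (since $\lambda_3 - \lambda_1 \ge ch$ and $\mu_h$ is within $\mathcal O(h^\infty)$ of $\lambda_1,\lambda_2$). This yields the first bound with the stated power $h^{-3/2}$, gaining one power $h^{-1}$ from the gap.

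For the second (gradient) bound, I apply the quadratic form: writing $w = (\mathrm{Id}-\Pi)\varphi_r$, one has $\| (-ih\nabla - \mathbf A) w \|^2 = \langle \mathcal H_h w, w\rangle - \langle V w, w\rangle \le \langle \mathcal H_h w, w \rangle + C\|w\|^2$ since $V$ is bounded. Because $w$ lives in the spectral subspace above $\lambda_3 \le \mu_h + ch$ only up to the projection error, a cleaner route is: $w = \varphi_r - \Pi\varphi_r$, so $(\mathcal H_h - \mu_h) w = v(x+L,y)\varphi_r - (\mathcal H_h - \mu_h)\Pi\varphi_r$, and $(\mathcal H_h - \mu_h)\Pi\varphi_r$ is small in $L^2$ (its norm is $\mathcal O(h^\infty \|\varphi_r\|)$ plus the hopping-size term, hence dominated). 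Then $\| (-ih\nabla - \mathbf A) w\|^2 \le \langle (\mathcal H_h - \mu_h) w, w\rangle + (\mu_h + C)\|w\|^2 \le \| (\mathcal H_h - \mu_h) w\|\,\|w\| + C\|w\|^2$, and both terms are $\mathcal O(h^{-3} e^{-2d(0,2L-a)/h})$, giving the square root bound claimed. The main obstacle is organizing these estimates so that the factor $h^{-1}$ lost from the spectral gap is not lost twice, and making sure the WKB upper bound is applied on a fixed compact neighbourhood of $\mathrm{supp}\, v(\cdot+L,\cdot)$ where Proposition \ref{lem:WKB0} is uniform — both are routine once the geometry $(2L-a$ vs $2L)$ is pinned down.
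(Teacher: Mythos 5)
Your reduction $(\mathcal H_h-\mu_h)\varphi_r = v(x+L,y)\varphi_r$, the bound $\|v(\cdot+L,\cdot)\varphi_r\|\leq Ch^{-1/2}e^{-d(0,2L-a)/h}$ from the WKB/Agmon estimate, and the spectral-gap argument giving $\|(I-\Pi)\varphi_r\|\leq C h^{-3/2}e^{-d(0,2L-a)/h}$ are correct and essentially identical to the paper's proof of the first estimate. The problem is in your gradient bound. You estimate $\|(-ih\nabla-\mathbf A)w\|^2\leq \|(\mathcal H_h-\mu_h)w\|\,\|w\|+C\|w\|^2$ with $w=(I-\Pi)\varphi_r$, write $(\mathcal H_h-\mu_h)w = v(x+L,y)\varphi_r-(\mathcal H_h-\mu_h)\Pi\varphi_r$, and dismiss the second term as ``dominated''. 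But all that Lemma \ref{lem.superposition} gives is $\|(\mathcal H_h-\mu_h)\Pi\varphi_r\|\leq\max_{i=1,2}|\lambda_i-\mu_h|=\mathcal O(h^\infty)$, and an $\mathcal O(h^\infty)$ error is \emph{not} dominated by the exponentially small target: the contribution $\mathcal O(h^\infty)\cdot\|w\|=\mathcal O\big(h^{N}e^{-d(0,2L-a)/h}\big)$ is, for every fixed $N$, much larger than the required $\mathcal O\big(h^{-3}e^{-2d(0,2L-a)/h}\big)$. Knowing that $|\lambda_i-\mu_h|$ is really of hopping (exponential) size is part of what Theorem \ref{thm.split} is supposed to prove, so invoking it here would be circular, and the paper's Lemma \ref{lem.superposition} is not stated with an exponential remainder.

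The fix is cheap, and it is exactly what the paper does: never take the norm of $(\mathcal H_h-\mu_h)w$. Since $\Pi$ is a spectral projector, $(\mathcal H_h-\mu_h)\Pi\varphi_r\in\mathrm{Ran}\,\Pi$ is orthogonal to $w\in\mathrm{Ran}(I-\Pi)$, so the dangerous term drops out of the quadratic form:
$\langle(\mathcal H_h-\mu_h)w,w\rangle=\langle (I-\Pi)\,v(\cdot+L,\cdot)\varphi_r,\,w\rangle\leq\|v(\cdot+L,\cdot)\varphi_r\|\,\|w\|=\mathcal O\big(h^{-2}e^{-2d(0,2L-a)/h}\big)$.
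Combining this with $-\langle Vw,w\rangle\leq 2|v_0|\,\|w\|^2$ and the already established $\|w\|\leq Ch^{-3/2}e^{-d(0,2L-a)/h}$ yields the gradient estimate; this is precisely the chain \eqref{eq:lem11}--\eqref{eq:lem13} in the paper, where the commutation of $\Pi$ with $\mathcal H_h$ is used before any Cauchy--Schwarz in operator norm. With this correction your argument coincides with the paper's.
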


\begin{proof}
We focus on $\varphi_\ell$, the estimates on $\varphi_r$ being identical. Let $\lambda_3$ be the third eigenvalue of $\mathcal H_h$. By definition of $\Pi$, we have the lower bound
\begin{equation}\label{eq:lem10}
\lambda_3 \| (I - \Pi)\varphi_\ell \|^2 \leq \langle \mathcal H_h (I-\Pi) \varphi_\ell, (I-\Pi) \varphi_\ell \rangle.
\end{equation}
On the other hand, since $\mathcal H_h$ and $\Pi$ commute, we can use the eigenvalue equation \eqref{eq:leftwell} to get
\begin{equation} \label{eq:lem11}
\langle \mathcal H_h (I-\Pi) \varphi_\ell, (I-\Pi) \varphi_\ell \rangle = \mu_h \| (I-\Pi) \varphi_\ell \|^2 + \langle (I-\Pi) v_r \varphi_\ell, (I-\Pi) \varphi_\ell \rangle,
\end{equation}
with $v_r(x,y) = v(x-L,y)$. Combining \eqref{eq:lem10} and \eqref{eq:lem11} we find
\begin{equation}\label{eq:lem12}
\| (I - \Pi) \varphi_\ell \| \leq (\lambda_3 - \mu_h)^{-1} \| v_r \varphi_\ell \|.
\end{equation}
For the gradient estimate we start from
\begin{equation}
\| (-ih\nabla -\mathbf A) (I-\Pi) \varphi_\ell \|^2 \leq \langle \mathcal H_h (I-\Pi) \varphi_\ell, (I-\Pi) \varphi_\ell \rangle + 2 |v_0| \| (I-\Pi) \varphi_\ell \|^2.
\end{equation}
We then use \eqref{eq:lem11} and \eqref{eq:lem12} to deduce
\begin{equation}\label{eq:lem13}
\| (-ih\nabla - \mathbf A) (I-\Pi) \varphi_\ell \|^2 \leq \big((\lambda_3 - \mu_h)^{-1} +  (\mu_h + 2 |v_0|) (\lambda_3 - \mu_h)^{-2} \big) \| v_r \varphi_\ell \|^2.
\end{equation}
With Proposition \ref{lem:WKB0} we finally bound $\| v_r \varphi_\ell \|$ in \eqref{eq:lem12} and \eqref{eq:lem13},
\begin{equation}
\| v_r \varphi_\ell \| \leq C h^{-\frac 12}e^{- \frac{d(0,2L-a)}{h}}.
\end{equation}
The result follows since $\lambda_3 - \mu_h \geq c h$, by Lemma \ref{lem.superposition}.
\end{proof}

%\textit{Remark.} The result of Lemma \ref{lem.superposition} is uniform with respect to $B \in (0,B_1)$, just as Proposition \ref{lem:WKB0}. Note that for $c$ we can take $\sqrt{B^2 + 2v''(0)}$, or even $\sqrt{2v''(0)}$ which is uniform with respect to $B$ (this follows from Proposition \ref{prop:singlewell}, since $\lambda_3$ is asympoticaly equal to $\mu_{h,1}$). Hence Lemma $\ref{lem:approx}$ is also uniform in $B$.

\subsection{Proof of Theorem \ref{thm.split}} \label{sec.proofthm2}

From Lemma \ref{lem:approx} we deduce, for $i,j \in \lbrace \ell, r \rbrace$, with $\psi_j = \Pi \varphi_j$,
\begin{align}\label{eq:psipsi}
\langle \psi_i | \psi_j \rangle &= \langle \varphi_i | \varphi_j \rangle + \mathcal O \big( h^{-3} e^{- \frac{2d(0,2L-a)}{h}} \big), \\
\langle \psi_i | \mathcal H_h \psi_j \rangle &= \langle \varphi_i | \mathcal H_h \varphi_j \rangle + \mathcal O \big( h^{-3} e^{-\frac{2d(0,2L-a)}{h}} \big),\label{eq:psiHpsi}
\end{align}
and also
\begin{align}
\langle \varphi_\ell | \mathcal H_h \varphi_\ell \rangle &= \mu_h + \mathcal O \big( h^{-3} e^{-\frac{2d(0,2L-a)}{h}} \big), \label{eq:phiellphiell} \\
\langle \varphi_r | \mathcal H_h \varphi_r \rangle &= \mu_h + \mathcal O \big( h^{-3} e^{-\frac{2d(0,2L-a)}{h}} \big),\label{eq:phirphir} \\
\langle \varphi_\ell | \mathcal H_h \varphi_r \rangle &= \mu_h \langle \varphi_\ell | \varphi_r \rangle + w_h.\label{eq:phiellphir}
\end{align}
We now use the orthonormal basis
\begin{equation}\label{eq.defpsihat}
\hat \psi_\ell = \frac{1}{\| \psi_\ell \|} \psi_\ell, \qquad \hat \psi_r = \frac{\psi_r - \langle \psi_r, \hat \psi_\ell \rangle \hat \psi_\ell}{\| \psi_r - \langle \psi_r, \hat \psi_\ell \rangle \hat \psi_\ell \|}.
\end{equation}
It follows from \eqref{eq:psipsi}, \eqref{eq:psiHpsi}, \eqref{eq:phiellphiell}, \eqref{eq:phirphir} and \eqref{eq:phiellphir} that the matrix of $\mathcal H_h$ in the orthonormal basis $(\hat \psi_\ell, \hat \psi_r)$ is
\begin{equation}
\begin{pmatrix}
\langle \hat \psi_\ell |\mathcal H_h \hat \psi_\ell \rangle & \langle \hat \psi_\ell | \mathcal H_h \hat \psi_r \rangle \\
\langle \hat \psi_r | \mathcal H_h \hat \psi_\ell \rangle & \langle \hat \psi_r | \mathcal H_h \hat \psi_r \rangle
\end{pmatrix}
=
\begin{pmatrix}
\mu_h & w_h \\
\overline {w}_h & \mu_h
\end{pmatrix}
+  \mathcal O \big( h^{-3} e^{-\frac{2d(0,2L-a)}{h}} \big).
\end{equation}
We deduce that the two first eigenvalues of $\mathcal H_h$ are
\begin{equation}
\lambda_{\pm} = \mu_h \pm | w_h | +   \mathcal O \big( h^{-3} e^{-\frac{2d(0,2L-a)}{h}} \big),
\end{equation}
and Theorem \ref{thm.split} follows. Note that, instead of $\hat \psi_j$, one could also use a more symmetric orthonormalization as in \cite{FMR}. The choice \eqref{eq.defpsihat} is the same as in \cite{FSW22}.

\section{Estimates on the hopping coefficient $w_h$}\label{sec:hopping}

We prove here the following estimate on the hopping coefficient, which governs the spectral gap by Theorem \ref{thm.split}.

\begin{theorem}\label{thm.wh}
There exists a constant $C(B,L,v) >0$ such that
\[ w_h = - C(B,L,v) h^{\frac 1 2} e^{- \frac S h}(1+o(1))\]
as $h \to 0$, with 
\begin{equation}\label{form.S}
S = 2d(0,L) + \int_0^L \sqrt{\frac{B^2(2L-r)^2}{4} -v_0} - \sqrt{\frac{B^2r^2}{4} - v_0} \dd r.
\end{equation}
The constant $C(B,L,v)$ has a long but explicit expression, see \eqref{eq.249}.
\end{theorem}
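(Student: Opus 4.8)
The plan is to reduce $w_h$ to a one-dimensional integral and estimate it by a complex Laplace (steepest-descent) method, in the spirit of \cite{HS}. Two reductions come first. Using the magnetic translations $\varphi_\ell=e^{-i\sigma_\ell/h}\varphi(\cdot+L,\cdot)$, $\varphi_r=e^{-i\sigma_r/h}\varphi(\cdot-L,\cdot)$, the reality of $\varphi$, and $\sigma_\ell-\sigma_r=BLy$, one gets $\varphi_\ell\overline{\varphi_r}=e^{-iBLy/h}\varphi(x+L,y)\varphi(x-L,y)$, hence, after the shift $u=x+L$,
\[ w_h=\int_{\R^2}v(u,y)\,e^{-iBLy/h}\,\varphi(u,y)\,\varphi(u-2L,y)\,\dd u\,\dd y . \]
Since in the complex deformation to come the integrand has to be holomorphic in $y$ and $v$ is only $C^\infty$, I would then rewrite $w_h$ à la Helffer--Sjöstrand as a magnetic flux free of the factor $v$: using the single-well equations, a cut-off $\chi(x)$ supported in the no man's land $\{-L+a<x<L-a\}$, and the conservation (in $x_0$) of $\int_\R\bigl(\overline{\varphi_\ell}\,\partial_x\varphi_r-\varphi_r\,\partial_x\overline{\varphi_\ell}\bigr)(x_0,y)\,\dd y$ (valid because $v\equiv0$ on that strip), one obtains, for any $x_0\in(-L+a,L-a)$,
\[ w_h=\pm\, h^2\int_\R\bigl(\overline{\varphi_\ell}\,\partial_x\varphi_r-\varphi_r\,\partial_x\overline{\varphi_\ell}\bigr)(x_0,y)\,\dd y . \]

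On the line $\{x=x_0\}$ both $\varphi_\ell$ and $\varphi_r$ are evaluated at distance $>a$ from their own well centre, so I would substitute the exact representation \eqref{eq:integral} (and its first-argument derivative) for each of them, together with $\alpha=\tfrac{|v_0|}{2Bh}+\nu+o(1)$. This expresses $w_h$ as $C_h^2$ times a threefold integral over $(y,t,s)\in\R\times(0,\infty)^2$ whose phase is, in $t$ and $s$, the one of Lemma~\ref{lem:WKB} with $L$ replaced by the distances $r_\pm:=\sqrt{(x_0\pm L)^2+y^2}$ from $(x_0,y)$ to the two well centres, plus a term $\mp iBLy$ coming from the gauge phases. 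Running the Laplace method first in $t$ and $s$ reproduces Lemma~\ref{lem:WKB}: the critical points are $t=t_{r_+}$, $s=t_{r_-}$, producing the exponential $e^{-(\tilde d(r_+)+\tilde d(r_-))/h}$ with $\tilde d(r)=\int_0^r\sqrt{\tfrac{B^2\sigma^2}{4}-v_0}\,\dd\sigma$; combined with the factor $e^{2(\tilde d(L)-d(0,L))/h}$ from $C_h^2$ and the identity $d(0,r)=d(0,a)+\tilde d(r)-\tilde d(a)$ (valid for $r\ge a$), the exponent collapses to $-\Phi_1(y)/h$ with
\[ \Phi_1(y)=d(0,r_+)+d(0,r_-)\mp iBLy, \]
and it remains to estimate the single integral $\int_\R(\mathrm{symbol})\,e^{-\Phi_1(y)/h}\,\dd y$.

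Because $\Phi_1'(y)=\sqrt{\tfrac{B^2r_+^2}{4}-v_0}\,\tfrac{y}{r_+}+\sqrt{\tfrac{B^2r_-^2}{4}-v_0}\,\tfrac{y}{r_-}\mp iBL$ has no real zero, the critical point is purely imaginary, $y_*=i\eta_*$; inserting $r_\pm=\sqrt{(x_0\pm L)^2-\eta_*^2}$ turns the critical equation into a scalar algebraic equation for $\eta_*$, and $\mathrm{Re}\,\Phi_1(y_*)$ acquires a strictly positive contribution $BL|\eta_*|$ from the imaginary shift. I would then verify that $\Phi_1(y_*)$ is real and equals $S$ of \eqref{form.S} --- this is exactly where the extra integral term of $S$, which has no non-magnetic analogue, originates --- and compute the second-order (Gaussian) correction, i.e. the $3\times3$ Hessian in $(y,t,s)$, together with the value of the symbol at the critical point; assembling this with the prefactor in \eqref{eq:Ch} produces $w_h=-C(B,L,v)h^{1/2}e^{-S/h}(1+o(1))$ with $C(B,L,v)>0$ and with the long explicit value \eqref{eq.249}. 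The main obstacle is making this complex-critical-point computation rigorous: the contour $\R$ must be deformed into $\{\mathrm{Im}\,y=\eta_*\}$ (steepest descent), so one must check that the integrand stays holomorphic along the deformed path (the representation \eqref{eq:integral} is analytic in $y$ in a suitable strip) and supply the accompanying steepest-descent error bounds; this is precisely why the flux formulation, free of $v$, is used rather than the naive overlap integral. Everything else --- the multivariate Laplace asymptotics, the identity $\Phi_1(y_*)=S$, and the bookkeeping of constants --- is routine but heavy. (The quantitative hypothesis $L>(1+\tfrac{\sqrt3}{2})a$ is used, in particular via Theorem~\ref{thm.split}, where it guarantees $2d(0,2L-a)>S$ so that the remainder there is negligible against $w_h$; the constant $\tfrac{\sqrt3}{2}$ arises from the worst case $v_0\to0$.)
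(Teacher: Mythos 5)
Your first half coincides with the paper's proof: the reduction of $w_h$ to a flux integral on a vertical line in the region where $V$ vanishes (the paper integrates by parts over the half-plane $\{x<0\}$ and takes the line $x=0$; your $x_0$-independence remark is the same mechanism), followed by substitution of the exact representation \eqref{eq:integral} for both factors, giving $C_h^2$ times a threefold integral over $(y,t,s)$. The divergence, and the gap, is in how you evaluate this integral. You propose to run the Laplace method in $(t,s)$ first, at fixed real $y$, obtaining an effective phase $\Phi_1(y)=d(0,r_+)+d(0,r_-)\mp iBLy$, and only then to deform the $y$-contour to a purely imaginary critical point. As written this is not a proof: the Laplace remainders in $(t,s)$ are controlled only for $y\in\R$, so the resulting approximation cannot simply be continued analytically and integrated over a shifted contour; to make your route rigorous one would have to deform the \emph{exact} threefold integral first (using holomorphy of \eqref{eq:integral} in $y$ in a strip) and then perform a joint three-variable saddle analysis with uniform error bounds, plus verify that the complex critical point exists, that $\mathrm{Re}\,\Phi_1$ attains its minimum on the deformed contour there, that $\Phi_1(y_*)=S$, and that the prefactor is a nonzero negative real number. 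None of this is supplied; you yourself flag it as ``the main obstacle,'' so the central asymptotic step of the theorem is missing.

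The observation that makes all of this unnecessary, and which your proposal does not contain, is that for a homogeneous field the $y$-dependence of the exact integrand is elementary: after inserting \eqref{eq:integral}, the exponent is $-\frac{B(L^2+y^2)}{2h}(1+t+s)-\frac{iBLy}{h}$, i.e.\ exactly quadratic in $y$ with a complex linear term, and the amplitude is affine in $y$. Hence the $y$-integral is a complex-centered Gaussian that can be computed \emph{exactly} (no steepest descent, no holomorphy discussion), leaving a two-dimensional integral in $(t,s)$ with the real phase $g(s,t)=\frac{BL^2}{2}\bigl(1+t+s+\frac{1}{1+t+s}\bigr)+\frac{|v_0|}{2B}\bigl(\ln\frac{1+t}{t}+\ln\frac{1+s}{s}\bigr)$, minimized at $t=s=t_\star$. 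A standard real Laplace method then gives \eqref{eq.249}, and combining with \eqref{eq:Ch} and the identity $\int_0^{2L}\sqrt{\tfrac{B^2r^2}{4}+|v_0|}\,\dd r-2\tilde d(L)=\int_0^L\sqrt{\tfrac{B^2(2L-r)^2}{4}-v_0}-\sqrt{\tfrac{B^2r^2}{4}-v_0}\,\dd r$ yields the rate \eqref{form.S} and the sign $w_h<0$. (Your parenthetical remark about the role of $L>(1+\tfrac{\sqrt3}{2})a$ is correct: it enters only through the comparison with the error in Theorem \ref{thm.split}, not in the present theorem.)
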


Contrary to the approach of  \cite{FSW22}, we use the representation of $w_h$ as an explicit integral on a line separating the two wells, as in \cite[Equation (2.25)]{HS}. Compared to the non-magnetic case, the novelty here is that the phase appearing in this integral takes complex values due to the magnetic flux. A similar effect was observed in the purely magnetic situation \cite{FMR}. However for constant magnetic fields the estimate is simpler, since the resulting complex integral is Gaussian. We give the details of the proof in Section \ref{subsec.wh} below.

Our main result, Theorem \ref{thm}, follows from the reduction to the hopping coefficient Theorem \ref{thm.split}, together with Theorem \ref{thm.wh}. We only need to ensure that the error from Theorem \ref{thm.split} is smaller than $w_h$. We discuss this condition in Section \ref{subsec.fin}.

\subsection{Proof of Theorem \ref{thm.wh}}\label{subsec.wh}
Since $v$ is supported in the ball of radius $a$, and $L>a$, the integral defining $w_h$ can be restricted to the left half-plane, $\Omega_\ell = \lbrace (x,y) | x <0 \rbrace$,
\begin{equation}
w_h = \int_{\Omega_\ell} v(x+L,y) \varphi_\ell \overline{\varphi_r} \dd x \dd y.
\end{equation}
Using the equations \eqref{eq:leftwell} and \eqref{eq:rightwell} satisfied by $\varphi_\ell$ and $\varphi_r$ on $\Omega_\ell$, we find
\begin{equation}
w_h = \int_{\Omega_\ell} \varphi_\ell \cdot \overline{(-ih \nabla - \mathbf A)^2 \varphi_r} \dd x \dd y - \int_{\Omega_\ell} (-ih \nabla - \mathbf A)^2 \varphi_\ell \cdot \overline{\varphi_r} \dd x \dd y.
\end{equation}
A partial integration yields
\begin{equation}
w_h = ih \int_{\partial \Omega_\ell} \varphi_\ell \overline{ (-ih\nabla - \mathbf A) \varphi_r} \cdot \mathbf n + \overline{\varphi_r} (-ih \nabla - \mathbf A) \varphi_\ell \cdot \mathbf n,
\end{equation}
where $\mathbf n = (1,0)$ is the outward normal to $\Omega_\ell$. Now due to our choice of gauge, $A_1 = 0$ and only remains
\begin{equation}\label{eq:wnice}
w_h = h^2 \int_{\R}  \overline{\varphi_r(0,y)} \partial_x \varphi_\ell (0,y) -  \varphi_\ell (0,y) \overline{\partial_x \varphi_r(0,y) } \dd y.
\end{equation}
We recall that $\varphi_\ell$ and $\varphi_r$ are related to the radial solution $\varphi$ of the single-well problem by
\begin{align}
\varphi_\ell(x,y) = e^{-\frac{i \sigma_\ell}{h}} \varphi(x+L,y), \qquad
\varphi_r(x,y) = e^{-\frac{i \sigma_r}{h}} \varphi(x-L,y).
\end{align}
We now use these relations, together with the integral representation formula in Lemma~\ref{eq:integral}, to calculate $w_h$. First of all,
\begin{equation}
\varphi_j(0,y) = C_h e^{-\frac{i \sigma_j}{h}} \int_0^\infty e^{- \frac{ B (L^2 + y^2)}{4h} (1+2t)} t^{\alpha -1} (1+t)^{-\alpha} \dd t,
\end{equation}
and 
\begin{align*}
\partial_x \varphi_\ell (0,y) &= C_h e^{-\frac{i\sigma_\ell}{h}}  \int_0^\infty e^{- \frac{ B (L^2 + y^2)}{4h} (1+2t)} \Big(- \frac i h \partial_x \sigma_\ell - \frac{B L}{2h} (1+2t) \Big)  t^{\alpha -1} (1+t)^{-\alpha} \dd t,\\
\partial_x \varphi_r(0,y) &= C_h e^{-\frac{i\sigma_r}{h}} \int_0^\infty e^{- \frac{ B (L^2 + y^2)}{4h} (1+2t)} \Big(- \frac i h \partial_x \sigma_r + \frac{B L}{2h} (1+2t) \Big)  t^{\alpha -1} (1+t)^{-\alpha} \dd t.
\end{align*}
We insert this in \eqref{eq:wnice},
\begin{align}
w_h = -h^2 C_h^2 &\int_{\R \times \R_+^2} \Big( \frac i h (\partial_x \sigma_r + \partial_x \sigma_\ell) + \frac{BL}{h} (1+t+s) \Big) e^{\frac i h ( \sigma_r - \sigma_\ell )} \nonumber \\
&e^{ - \frac{ B (L^2 + y^2)}{2h} (1+t+s) } (ts)^{\alpha -1} (1+t)^{-\alpha}(1+s)^{-\alpha}  \dd y \dd t \dd s. \label{eq:wint1}
\end{align}
In \eqref{eq:wint1} we use $\sigma_r - \sigma_\ell = -BLy$,
\begin{equation*}
w_h = -h C_h^2 BL \int_{\R \times \R_+^2} e^{- \frac{B(L^2+y^2)}{2h}(1+t+s) - \frac{iBLy}{h}} \frac{(1+t+s -i  y / L) (ts)^{\alpha - 1}}{(1+t)^{\alpha} (1+s)^{\alpha}} \dd y \dd t \dd s.
\end{equation*}
Here the $y$-integral is Gaussian: inside the exponential we have
{\small
\begin{equation*}
\frac{B(L^2 + y^2)}{2}(1+t+s) + i BL y = \frac{B(1+t+s)}{2} \big( y + \frac{iL}{1+t+s} \big)^2 + \frac{BL^2}{2} \big( 1+ t +s  + \frac{1}{1+t+s} \big),
\end{equation*}}
and this complex-centered Gaussian is integrated as
\begin{equation}
\int_\R \exp \Big(- \frac{B(1+t+s)}{2h} \big( y +\frac{iL}{1+t+s}  \big)^2 \Big) \dd y = \sqrt{\frac{2\pi h}{B(1+t+s)}},
\end{equation}
and
\begin{equation}
\int_\R \frac{-iy}{L}\exp \Big(- \frac{B(1+t+s)}{2h} \big( y +\frac{iL}{1+t+s}  \big)^2 \Big) \dd y = \frac{-1}{1+t+s}\sqrt{\frac{2\pi h}{B(1+t+s)}}.
\end{equation}
Thus,
\begin{equation*}
w_h = - h^{\frac 3 2} C_h^2 \sqrt{2 \pi B L^2 } \int_{\R_+^2} \exp \Big( - \frac{BL^2}{2h} \big( 1+t+s + \frac{1}{1+t+s} \big) \Big) \frac{\omega(s,t)(ts)^{\alpha-1}}{(1+t)^\alpha (1+s)^\alpha} \dd t \dd s,
\end{equation*}
with $\omega(s,t) = (1+t+s)^{1/2} - (1+t+s)^{-3/2}\geq 0$.
We replace $\alpha = \frac{|v_0|}{2Bh} + \nu + o(1)$ as $h\to 0$, 
\begin{equation}\label{eq:wexplicit}
w_h= -h^{\frac 3 2} C_h^2  \sqrt{2 \pi B L^2 } \int_{\R_+^2} e^{- \frac{g(s,t)}{h}} \frac{\omega(s,t)(ts)^{\nu-1}}{(1+t)^\nu (1+s)^\nu} \dd t \dd s \, (1+o(1)),
\end{equation}
with 
\begin{equation}
g(s,t) =\frac{ BL^2}{2} \Big( 1+t+s + \frac{1}{1+t+s} \Big) + \frac{|v_0|}{2B} \Big( \ln \big( \frac{1+t}{t} \big) + \ln \big( \frac{1+s}{s} \big) \Big).
\end{equation}
The function $g$ has a unique critical point, which is also a global minimum, at $t=s=t_\star$, with
\begin{equation}
t_\star = \frac 1 2 \sqrt N - \frac 1 2 + \frac 1 2 \sqrt{1+N}, \qquad N = \frac{|v_0|}{B^2L^2}.
\end{equation}
Moreover,
\begin{equation}
g(t_\star,t_\star) = BL^2 \Big( \sqrt{1+N} + N \ln \big( \frac{1+\sqrt{1+N}}{\sqrt N}\big) \Big) =  \int_0^{2L} \sqrt{\frac{B^2 r^2}{4}+ |v_0|} \dd r.
\end{equation}
Using the Laplace method, the integral \eqref{eq:wexplicit} can be estimated as
\begin{equation}\label{eq.249}
w_h \sim - h^{5/2} C_h^2 \frac{(2\pi)^{3/2}\sqrt{BL^2}}{|g''(t_\star,t_\star)|^{\frac 1 2}} \frac{\omega(t_\star,t_\star) t_\star^{2\nu -2}}{(1+t_\star)^{2\nu}} \exp \Big( - \frac 1 h \int_0^{2L} \sqrt{\frac{B^2r^2}{4} + |v_0|} \dd r \Big),
\end{equation}
when $h\to 0$. We insert the estimate \eqref{eq:Ch} on $C_h$, and we find an explicit constant $C(B,L,v) >0$ such that
\begin{equation}
w_h \sim - C(B,L,v) h^{\frac 1 2} \exp \Big( - \frac 1 h \int_0^{2L} \sqrt{\frac{B^2r^2}{4} + |v_0|} \dd r + \frac{2}{h} \tilde{d}(L) - \frac 2 h d(0,L) \Big).
\end{equation}
We find the exponential rate \eqref{form.S} since
\begin{align*}
\int_0^{2L} &\sqrt{\frac{B^2r^2}{4} + |v_0|} \dd r - 2 \tilde d(L) =  \int_0^L \sqrt{\frac{B^2(2L-r)^2}{4} -v_0} - \sqrt{\frac{B^2r^2}{4} - v_0} \dd r.
\end{align*}

%\textit{Remark.} The remainder $o(1)$ in Theorem \ref{thm.wh} is uniform with respect to $B \in (0,B_1)$. Indeed, this follows from the uniformity of the Laplace method in \eqref{eq.249}, as in the proof of Lemma \ref{lem:WKB}.

\subsection{Condition on the error terms} \label{subsec.fin}

To ensure the error terms from Theorem \ref{thm.split} to be smaller than $w_h$, we need $S< 2 d(0,2L-a)$. We recall formula \eqref{form.S} for $S$, which can be rewritten as
\begin{equation}
S = d(0,2L) + d(0,L) - \tilde d (L) = 2d(0,2L) - \tilde d(2L).
\end{equation}
Thus,
\begin{align*}
S-2d(0,2L-a) = 2d(2L-a,2L) - \tilde d(2L) = d(2L-a,2L) - \tilde d(2L-a).
\end{align*}
Hence the condition $S < 2d(0,2L-a)$ is satisfied as soon as
\begin{equation}\label{condition}
d(2L-a,2L) < \tilde{d}(2L-a).
\end{equation}
We now show that \eqref{condition} is true as soon as $L> (1+\frac{\sqrt 3}{2}) a$. First, we bound the left-hand side by
\begin{equation}
d(2L-a,2L) \leq \int_{2L-a}^{2L} \frac{Br}{2} + \sqrt{|v_0|} \dd r = \sqrt{|v_0|} a + \frac{Ba}{4}(4L-a),
\end{equation}
and the right-hand side
\begin{equation}
\tilde d (2L-a) \geq \int_0^a \sqrt{| v_0 |} \dd r + \int_a^{2L-a} \frac{Br}{2} \dd r = \sqrt{|v_0|} a + BL(L-a).
\end{equation}
Thus, a sufficient condition for \eqref{condition} to hold is
\[B L(L-a) > B La - \frac{Ba^2}{4},\]
which is equivalent to $L > (1+ \frac{\sqrt{3}}{2})a$.

\section*{Acknowledgements}

The author thanks Søren Fournais, Bernard Helffer, Ayman Kachmar, and Nicolas Raymond for many enlightening discussions, and for encouraging this work. 

This work is funded by the European Union. Views and opinions expressed are however those of the author only and do not necessarily reflect those of the European Union or the European Research Council. Neither the European Union nor the granting authority can be held responsible for them.

\bibliographystyle{plain}
\bibliography{bibliotunnel}

\end{document}